\documentclass[12pt,reqno]{elsarticle}

\usepackage{amsfonts,color,amsmath,amssymb,fancyhdr}
\usepackage{amsthm}
\usepackage{tikz}
\usepackage{graphicx}
\usepackage{subfigure}
%
\textheight 23cm
\textwidth16cm
\hoffset-1.5truecm
\voffset-2truecm

\def\Box{\vcenter{\vbox{\hrule\hbox{\vrule
     \vbox to 8.8pt{\hbox to 10pt{}\vfill}\vrule}\hrule}}}

\newcommand{\tr}{\textup{Tr}}

\newcommand{\PGaL}{\textup{P}\Gamma \textup{L}}

\newcommand{\F}{{\mathbb F}}

\newcommand{\cN}{{\mathcal N}}

\newcommand{\cU}{{\mathcal U}}

\newcommand{\cS}{{\mathcal S}}

\newcommand{\PG}{\textup{PG}}

\newcommand{\Aut}{\textup{Aut}}

\newtheorem{thm}{Theorem}

\newtheorem{lemma}[thm]{Lemma}

\numberwithin{equation}{section}
\numberwithin{thm}{section}

\theoremstyle{definition}

\begin{document}
\newcommand{\stopthm}{\begin{flushright}
\(\box \;\;\;\;\;\;\;\;\;\; \)
\end{flushright}}

\newcommand{\symfont}{\fam \mathfam}

\title{On the existence of O'Nan configurations in ovoidal Buekenhout-Metz unitals in $\PG(2,q^2)$}
\author[add1]{Tao Feng} \ead{tfeng@zju.edu.cn}
\author[add1]{Weicong Li\corref{cor1}}\ead{conglw@zju.edu.cn}
\cortext[cor1]{Corresponding author}
\address[add1]{School of Mathematical Sciences, Zhejiang University, 38 Zheda Road, Hangzhou 310027, Zhejiang P.R China}

\begin{abstract}

In this paper, we establish the existence of O'Nan configurations in all nonclassical ovoidal Buekenhout-Metz unitals in  $\PG(2,q^2)$.

 \vspace*{3mm}

\noindent \textbf{Keyword:} Buekenhout unital, O'Nan configuration, Desargusian plane\\[1mm]
\noindent \textbf{Mathematics Subject Classification:} 51E05, 51E30
\end{abstract}

\maketitle

\section{Introduction}
A unital of order $n$ is a design with parameters $2-(n^3+1,n+1,1)$. All the known unitals have order a prime power except one of order $6$ constructed in \cite{BB} and \cite{Mathon6}. The classical unital of order $q$, $q$ a prime power, consists of the absolute points and non-absolute lines of a unitary polarity in the Desarguesian plane $\PG(2,q^2)$.  A unital $\cU$ embedded in a projective plane $\Pi$ of order $q^2$ is a set  of $q^3+1$ points of $\Pi$ such that each line meets $\cU$ in exactly  $1$ or $q+1$ points. A line that intersects the unital $\cU$ in  $1$ or $q+1$ points is called a tangent or secant line respectively. The blocks of $\cU$ consist of the intersections of $\cU$ with the secant lines.

In 1976, Buekenhout \cite{Buekenhout1976Existence} used the Bruck-Bose model to show that each two-dimensional translation plane contains a unital. The unitals arising from this construction are called Buekenhout unitals. Metz \cite{Metz1979On} showed that there are  nonclassical Buekenhout unitals in $\PG(2,q^2)$ for any prime power $q>2$. All the known unitals in finite Desraguesian planes are Buekenhout unitals. Please refer to the monograph \cite{Ebert2008Unitals} for more information.

In 1972, O'Nan \cite{O1972Automorphisms} observed that the classical unitals contain no {\bf O'Nan configuration}, which is a configuration consisting of  four distinct lines intersecting in six distinct points of $\cU$. In \cite{PiperUnitary} Piper  conjectured that the absence of O'Nan configurations characterizes the classical unitals. In \cite{MR690826} Wilbrink gave an intrinsic characterization of the classical unitals assuming the absence of such a configuration and two further conditions. In \cite{Hui2014On}, the authors gave a necessary and sufficient condition for a unital to be embedded in a projective plane and strengthened Wilbrink's results based on the characterization results in \cite{MR690826} and \cite{GSV}. On the other hand, the existence of O'Nan configurations in certain unitals embedded  in non-Desarguesian projective planes has been established in \cite{Hui2013on,Tai2014on}.

In this paper, we consider the existence of O'Nan configurations in Buekenhout unitals in $\PG(2,q^2)$. In Section 2, we introduce some backgrounds and preliminary results. In Section 3, we establish the existence of an O'Nan configuration in each nonclassical orthogonal Bukenhout-Metz unital $\cU$, where the configuration is fixed by an involution in the stabilizer of $\cU$ in $\textup{P}\Gamma\textup{L}(3,q^2)$. In section 4, we establish the existence of O'Nan configurations of a particular form that contains a fixed Baer subline in Buekenhout-Tits unitals.

\section{Preliminaries}
Throughout this paper, we fix the following notation. Let $p$ be a prime and $m$ be a positive integer such that $q:=p^m$ is larger than $2$. For a divisor $d$ of $m$, define the trace function as
\[
\tr_{q/p^d}(x)=x+x^{p^d}+\cdots+x^{p^{m-d}},\quad\textup{ for } x\in\F_{q}.
\]
For a nonzero vector $u\in\F_q^3$, we define
\[
[u]:=\{x\in\PG(2,q):\,x\cdot u=0\},
\]
where $\cdot$ is the usual Euclidean inner product. It is clear that $[u]$ is a line of $\PG(2,q)$.

Let us briefly recall Buekenhout's construction of unitals from ovoidal cones.
Let $\Sigma_{\infty}$ be a hyperplane of the projective $4$-space $\PG(4,q)$, and suppose that $\Sigma_\infty$ contains a spread $\cS$. We can define an incidence structure $\Pi$ as follows: the points of $\Pi$ are the points in $\PG(4,q)\setminus\Sigma_{\infty}$  and the spread lines of $\mathcal{S}$, the lines of $\Pi$ are the planes in $\PG(4,q)$ intersecting $\Sigma_{\infty}$ in a line of $\cS$ together with $\Sigma_{\infty}$, and  incidence is by inclusion.  Then $\Pi$ is a translation plane, and this is known as the Bruck-Bose model \cite{Bruck1963The,R1966Linear}. Take an ovoidal cone $U$ of $\PG(4,q)$ that meets $\Sigma_\infty$ in a line $\ell$ of $\cS$. Then the set $\cU$ of  points of $\Pi$ contained in $U$ forms a unital in $\Pi$, called an \textit{ovoidal Buekenhout-Metz unital}. We call $\ell$ the \textit{special point} of $\cU$.  In the case $U$ is an elliptic cone, the corresponding unital $\cU$ is an orthogonal Buekenhout-Metz unital; in the case $U$ is a cone over a Tits ovoid, $\cU$ is a Buekenhout-Tits unital. In the sequel, we will only consider the case $\Pi$ is a Desarguesian plane $\PG(2,q^2)$, i.e., $S$ is a regular spread.  There is a similar construction with $U$ replaced by a nonsingular quadric in $\PG(4,q)$. Barwick \cite{MR1298769} used a counting argument to show that the construction using nonsingular quadrics in $\PG(4,q)$ does not give new examples of unitals in $\PG(2,q^2)$.

In \cite{Baker1992On,Ebert1992On} Baker and Ebert derived the  expression for ovoidal Buekenhout-Metz unitals in $\PG(2,q^2)$, $q>2$. Each orthogonal Buekenhout-Metz unital in $\PG(2,q^2)$, $q>2$, is projectively equivalent to one of the following form:
\begin{equation}\label{def_BM}
\cU_{\alpha,\beta}=\{(x,\alpha\,x^2+\beta x^{q+1}+r,1): r\in \F_q\, , x\in \F_{q^2}\}\cup\{(0,1,0)\}
\end{equation}
where $\alpha,\beta$ are elements in $\F_{q^2}$ with the following properties:  $d= (\beta-\beta^q)^2+4\alpha^{q+1}$ is a nonsquare in $\F_q$ in the case $q$ is odd;  $\beta\not\in\F_q$, and $d=\frac{\alpha^{q+1}}{(\beta+\beta^q)^2}$ has absolute trace $0$ in the case $q$ is even and $q>2$. The quantity $d$ is called the \textit{discriminant} of the unital $\cU_{\alpha,\beta}$. The unital $\cU_{\alpha,\beta}$ is classical if and only if $\alpha=0$. The equivalence amongst such unitals is determined as follows.
\begin{thm}\label{BM_pj}\cite{Baker1992On, Ebert1992On}
In $\PG(2,q^2)$, $q>2$, two   unials  $\cU_{\alpha,\beta}$ and $\cU_{\alpha',\beta'}$ are projectively equivalent if and only if there exists $f\in \F_{q}^*$, $s\in \F_{q^2}^*$, $u\in \F_q$ and $\sigma \in \Aut(\F_{q^2})$ such that
\[
 \alpha'=\alpha^{\sigma}s^2f,\quad \beta'=\beta^{\sigma}s^{q+1}f+u.
\]
\end{thm}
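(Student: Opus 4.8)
The plan is to prove the two implications by different means: the ``if'' direction by exhibiting an explicit collineation, and the ``only if'' direction by first showing a distinguished point is preserved and then comparing coefficients; throughout I use that the affine part of $\cU_{\alpha,\beta}$ is exactly $\{(x,y,1):\ y-\alpha x^2-\beta x^{q+1}\in\F_q\}$, since for fixed $x$ the term $r$ runs over $\F_q$. For the ``if'' direction, suppose $f\in\F_q^*$, $s\in\F_{q^2}^*$, $u\in\F_q$ and $\sigma\in\Aut(\F_{q^2})$ realize the two displayed relations. Applying $\sigma$ to every coordinate is a collineation in $\PGaL(3,q^2)$ sending $\cU_{\alpha,\beta}$ to $\cU_{\alpha^\sigma,\beta^\sigma}$; here one uses that $\sigma$ commutes with $x\mapsto x^q$ and stabilizes $\F_q$, so that $(x^{q+1})^\sigma=(x^\sigma)^{q+1}$ and the free parameter stays in $\F_q$. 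Next, the linear map $(x,y,z)\mapsto(s^{-1}x,\,fy,\,z)$ sends $\cU_{\alpha^\sigma,\beta^\sigma}$ to $\cU_{\alpha^\sigma s^2f,\ \beta^\sigma s^{q+1}f}$, as one checks by substituting $x=s\tilde x$, $y=f^{-1}\tilde y$ into the membership condition and multiplying through by $f\in\F_q^*$. Finally, the elementary identity $\cU_{\gamma,\delta}=\cU_{\gamma,\delta+u}$ for all $u\in\F_q$ holds because replacing $\delta$ by $\delta+u$ only shifts the free parameter by $u\,x^{q+1}\in\F_q$; this absorbs the additive $u$ and produces $\cU_{\alpha',\beta'}$. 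This direction is thus a routine verification.

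For the ``only if'' direction, let $\phi\in\PGaL(3,q^2)$ satisfy $\phi(\cU_{\alpha,\beta})=\cU_{\alpha',\beta'}$. The decisive step is that $\phi$ must carry the special point $(0,1,0)$ of $\cU_{\alpha,\beta}$ to the special point of $\cU_{\alpha',\beta'}$, and hence the line $[(0,0,1)]$ (the line $z=0$, which is the unique tangent at the special point, since it meets $\cU$ only in $(0,1,0)$) to the corresponding tangent line. In the nonclassical case this is forced by the fact that an ovoidal Buekenhout-Metz unital has a unique special point independent of the Bruck-Bose representation (part of the theory recalled in \cite{Ebert2008Unitals}), so an isomorphism cannot send it to an ordinary point; in the classical case all points of $\cU$ are projectively equivalent, and one pre-composes $\phi$ with a collineation of $\cU_{\alpha',\beta'}$ moving the image of the special point back to $(0,1,0)$. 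Either way we are reduced to a $\phi$ fixing $(0,1,0)$ and stabilizing $z=0$.

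Writing such a $\phi$ as $v\mapsto Mv^\sigma$, the fixed-point condition kills the off-diagonal entries of the second column of $M$ and the stabilized-line condition kills the bottom row apart from its $(3,3)$-entry, so $M$ is triangular. I would then substitute a general affine point $(x,\alpha x^2+\beta x^{q+1}+r,1)$, apply $\sigma$ and $M$, dehomogenize, and express membership in $\cU_{\alpha',\beta'}$ as the condition that a function $g(X)$ of $X=x^\sigma$ lie in $\F_q$ for all $X\in\F_{q^2}$ and all $r\in\F_q$. The $r$-dependence alone gives $f:=m_{22}/m_{33}\in\F_q^*$. Expanding $g(X)=g(X)^q$ as an identity of functions on $\F_{q^2}$ and using that the exponents $0,1,2,q,q+1,2q$ are distinct and less than $q^2$ for $q\ge3$, the coefficients vanish term by term: the $X^2$-coefficient yields $\alpha'=\alpha^\sigma s^2f$ (with $s=m_{33}/m_{11}$), the $\F_q$-membership of the $X^{q+1}$-coefficient yields $\beta'=\beta^\sigma s^{q+1}f+u$ for some $u\in\F_q$, and the coefficients of $X^q$, $X$ and $1$ only fix the remaining entries of $M$ without further constraining $\alpha',\beta'$.

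The step I expect to be the main obstacle is the invariance of the special point, because the rest is either an explicit construction or a robust, characteristic-free polynomial identity (for $q$ even the term $2\alpha' sc$ simply drops and the vanishing of the $X^2$-coefficient still follows, since squaring is a bijection on $\F_{q^2}$). The entire geometric weight of the theorem rests on the claim that a collineation between two nonclassical ovoidal Buekenhout-Metz unitals cannot move the special point to an ordinary point. I would therefore either invoke the uniqueness of the special point from Buekenhout-Metz theory or, for a self-contained account, prove that the special point is intrinsically distinguished from all ordinary points of $\cU$, for instance through the behaviour of the tangent lines and feet configurations at the special point.
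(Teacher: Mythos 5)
The paper does not prove this statement at all: Theorem~\ref{BM_pj} is quoted verbatim from \cite{Baker1992On} and \cite{Ebert1992On} and used as a black box, so there is no internal proof to compare yours against. Judged on its own terms, your proposal follows the standard argument of those references and is essentially sound. The ``if'' direction is complete and correct: the field automorphism applied coordinatewise sends $\cU_{\alpha,\beta}$ to $\cU_{\alpha^{\sigma},\beta^{\sigma}}$, the diagonal map $(x,y,z)\mapsto(s^{-1}x,fy,z)$ produces $\cU_{\alpha^{\sigma}s^2f,\,\beta^{\sigma}s^{q+1}f}$, and the additive $u$ is absorbed because $ux^{q+1}\in\F_q$. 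The ``only if'' direction is correctly structured (reduce to a collineation fixing $(0,1,0)$ and the line $z=0$, write it as $v\mapsto Mv^{\sigma}$ with $m_{12}=m_{31}=m_{32}=0$, and compare coefficients of $1,X,X^2,X^q,X^{q+1},X^{2q}$ in $g(X)-g(X)^q$, which are distinct monomials of degree less than $q^2$ for $q>2$), and the extraction of $f=m_{22}/m_{33}\in\F_q^*$, $s=m_{33}/m_{11}$, and the two displayed relations is right. The one genuine gap is the one you flag yourself: the claim that an equivalence between two nonclassical ovoidal Buekenhout--Metz unitals must carry special point to special point. That is not a routine step --- it is the geometric heart of the theorem and is where Baker and Ebert spend their effort (via an intrinsic characterization of the special point, e.g.\ through the Baer-subline structure of the secants through it, or through the structure of the stabilizer); citing it from \cite{Ebert2008Unitals} is legitimate but means your argument is a reduction to that fact rather than a self-contained proof. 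The classical-case patch (precomposing with an element of the transitive stabilizer) is fine.
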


We next describe the expression for the Buekenhout-Tits unitals. Let $q=2^m$ with $m$ an odd integer greater than $1$. Set 
\[
f(x_0,x_1)=x_0^{\tau+2}+x_0x_1+x_1^{\tau},\quad \tau:=2^{(m+1)/2}.
\]
By a similar procedure to that in \cite{Ebert1997Buekenhout} and some detailed analysis, we can show that a Buekenhout-Tits unital in $\PG(2,q^2)$ must be projectively equivalent to one of  the following form
\begin{equation} \label{def_BT}
   \cU_{T}=\{(0,1,0)\}\cup\{(x_0+x_1\delta,r+f(x_0,x_1)\delta,1):x_0,x_1,r \in \F_q \},
\end{equation}
where $\delta$ is an element of $\F_{q^2}\setminus\F_q$, and different choices of $\delta$ yields projectively equivalent unitals. In particular, the Buekenhout-Tits unital in $\PG(2,q^2)$ is unique up to projective equivalence, justifying the notation $\cU_T$. We do not find a reference for this uniqueness result in the literature, but we do not include a proof here due to its considerable similarity with \cite{Ebert1997Buekenhout}.\\

The following result is Theorem 12.8.7 in  \cite{PayneTopics}.
\begin{lemma}\label{fxy}
If $f(x,y)=x^{\tau+2}+xy+y^{\tau}\ne 0$ for $x,y\in\F_q$, then
$$\frac{1}{f(x,y)}=f\left(\frac{y}{f(x,y)},\frac{x}{f(x,y)}\right).$$
\end{lemma}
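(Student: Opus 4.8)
The plan is to reduce the claimed identity to a single polynomial identity over $\F_q$ and then verify it by a direct expansion. Writing $f = f(x,y)$ for brevity, the right-hand side is
\[
f\!\left(\frac{y}{f},\frac{x}{f}\right) = \frac{y^{\tau+2}}{f^{\tau+2}} + \frac{xy}{f^{2}} + \frac{x^{\tau}}{f^{\tau}}.
\]
Since $f\neq 0$ and $\tau+2$ dominates the exponents $2$ and $\tau$ appearing in the denominators (as $\tau\geq 4$ here), multiplying the desired equation $1/f = f(y/f,x/f)$ through by $f^{\tau+2}$ shows the lemma is equivalent to the identity
\[
f^{\tau+1} = y^{\tau+2} + xy\,f^{\tau} + x^{\tau} f^{2}.
\]
So it suffices to establish this last identity.

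The key computational ingredient is that we work in characteristic $2$, where raising to any power of $2$ is an additive (Frobenius) map, together with the relation $\tau^{2}=2^{m+1}=2q$. First I would compute, using $(a+b)^{2}=a^{2}+b^{2}$,
\[
f^{2} = x^{2\tau+4} + x^{2}y^{2} + y^{2\tau}.
\]
For $f^{\tau}$ I would again use additivity to obtain $f^{\tau} = x^{\tau^{2}+2\tau} + x^{\tau}y^{\tau} + y^{\tau^{2}}$, and then collapse the exponents using $\tau^{2}=2q$ and the fact that $x^{q}=x$ for $x\in\F_q$, so that $x^{\tau^{2}}=x^{2q}=x^{2}$ and likewise $y^{\tau^{2}}=y^{2}$. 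This yields the clean form
\[
f^{\tau} = x^{2\tau+2} + x^{\tau}y^{\tau} + y^{2}.
\]

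With these two expressions in hand, I would expand both sides of the reduced identity as polynomials in $x$ and $y$. On the right-hand side, $xy\,f^{\tau}$ and $x^{\tau}f^{2}$ each produce three monomials, which together with the standalone term $y^{\tau+2}$ give seven monomials with no cancellation; on the left, $f^{\tau+1}=f^{\tau}\cdot f$ produces nine monomials, of which two copies of $x^{2\tau+2}y^{\tau}$ cancel in characteristic $2$, again leaving seven. A term-by-term comparison then shows both sides equal
\[
x^{3\tau+4} + x^{2\tau+3}y + x^{\tau+2}y^{2} + x^{\tau+1}y^{\tau+1} + x^{\tau}y^{2\tau} + xy^{3} + y^{\tau+2},
\]
completing the proof. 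The only real subtlety is the exponent bookkeeping, in particular correctly reducing $x^{\tau^{2}}$ and $y^{\tau^{2}}$ to $x^{2}$ and $y^{2}$ via $x^{q}=x$; once the relation $\tau^{2}=2q$ is exploited, everything else is a routine if slightly tedious verification.
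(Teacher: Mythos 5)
Your proposal is correct and follows essentially the same route as the paper: both reduce the claim to the single identity $f^{\tau+1}=y^{\tau+2}+xy\,f^{\tau}+x^{\tau}f^{2}$ (the paper divides by $f^{\tau+2}$, you multiply by it) and then verify that identity directly. Your expansion, including the reduction $x^{\tau^{2}}=x^{2q}=x^{2}$ for $x\in\F_q$ and the cancellation of the two $x^{2\tau+2}y^{\tau}$ terms, checks out; you have simply written out the computation that the paper leaves as a ``direct check.''
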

\begin{proof}
The claim follows by directly checking that
\begin{equation*} 
\begin{array}{rl}
f(x,y)^{\tau+1}=y^{\tau+2}+xy(f(x,y))^\tau+x^\tau(f(x,y))^2
\end{array}
\end{equation*}
and dividing both sides by $f(x,y)^{\tau+2}$.
\end{proof}
We shall also need the following result on O'Nan configurations.
\begin{lemma}\cite[Lemma 7.42]{Ebert2008Unitals} \label{Spoint}
Let $\cU$ be a ovoidal Buekenhout-Metz unital in $\PG(2,q^2)$, $q>2$. Then there is no O'Nan configuration that contains the special point of $\cU$.
\end{lemma}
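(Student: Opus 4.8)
The plan is to work in the Baker--Ebert normal form \eqref{def_BM}, where the special point is $P=(0,1,0)$ and, writing $g(x):=\alpha x^2+\beta x^{q+1}$, the affine points of $\cU$ are the pairs $(x,g(x)+r)$ with $x\in\F_{q^2}$ and $r\in\F_q$. I would first record that the secant lines through $P$ are exactly the $q^2$ vertical lines $X=c$ with $c\in\F_{q^2}$, each meeting $\cU$ in the $q+1$ points $\{(c,g(c)+r):r\in\F_q\}\cup\{P\}$, the line at infinity being the unique tangent at $P$. Now assume for contradiction that an O'Nan configuration contains $P$. Since each of its four lines carries three of the six configuration points and $q>2$, every line is a secant; in particular the two lines through $P$ are vertical, say $X=a$ and $X=b$ with $a\ne b$. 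The four remaining points then split as $A_1,A_2$ on $X=a$ and $B_1,B_2$ on $X=b$, the two further lines are $A_1B_1$ and $A_2B_2$, and the sixth point is their intersection $C$.

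The heart of the argument is to locate $C$ and test whether $C\in\cU$. Writing $A_i=(a,g(a)+r_i)$ and $B_i=(b,g(b)+s_i)$ with $r_i,s_i\in\F_q$, a short linear computation gives $C=(x_C,y_C)$ with $x_C=a+t(b-a)$ and $y_C=(1-t)(g(a)+r_1)+t(g(b)+s_1)$, where $t=\tfrac{r_2-r_1}{(r_2-r_1)-(s_2-s_1)}\in\F_q$ (distinctness of the six points forces the lines to have distinct slopes, so this is well defined, and forces $t\ne0,1$). Because $C\in\cU$ if and only if $y_C-g(x_C)\in\F_q$ and the term $(1-t)r_1+ts_1$ already lies in $\F_q$, this reduces to the membership $(1-t)g(a)+t\,g(b)-g\bigl(a+t(b-a)\bigr)\in\F_q$. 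I would then verify the clean identity
\[
(1-t)\,g(a)+t\,g(b)-g\bigl(a+t(b-a)\bigr)=t(1-t)\,g(b-a),
\]
which follows by expanding the quadratic term $\alpha x^2$ and the norm term $\beta x^{q+1}$ of $g$ and using $t\in\F_q$. Since $t(1-t)\in\F_q^*$, we conclude that $C\in\cU$ if and only if $g(b-a)\in\F_q$.

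It then remains to prove that $g(w)=\alpha w^2+\beta w^{q+1}\notin\F_q$ for every $w\in\F_{q^2}^*$, which contradicts $g(b-a)\in\F_q$ and finishes the proof. I would reduce this to a statement about roots of unity: setting $z=w^{q-1}$, which ranges over the group $\mu_{q+1}$ of $(q+1)$-st roots of unity as $w$ runs over $\F_{q^2}^*$, the condition $g(w)^q=g(w)$ becomes, after dividing by $w^{q+1}$, the requirement that $z$ be a root of $\alpha^q z^2-(\beta-\beta^q)z-\alpha=0$ when $q$ is odd, and of $\alpha^q z^2+(\beta+\beta^q)z+\alpha=0$ when $q$ is even. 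The key point is that the discriminant of this quadratic is precisely the unital discriminant $d$, so the defining conditions on $d$ are exactly what rules out a root on the unit circle. For $\alpha=0$ (the classical case) the equation forces $z=0\notin\mu_{q+1}$, since $\beta\notin\F_q$. For $\alpha\ne0$ I would compute the norm $N(z)=z^{q+1}$ of a root directly. In odd characteristic the roots are $z_\pm=\tfrac{(\beta-\beta^q)\pm\sqrt d}{2\alpha^q}$, whence $N(z_\pm)=-\tfrac{(\,(\beta-\beta^q)\pm\sqrt d\,)^2}{4\alpha^{q+1}}$; using $d=(\beta-\beta^q)^2+4\alpha^{q+1}$ one finds $N(z_\pm)=1$ only if $\beta-\beta^q=\mp\sqrt d$, which squares to $4\alpha^{q+1}=0$, a contradiction. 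In even characteristic the trace-zero hypothesis on $d$ lets me write a root as $z=\tfrac{(\beta+\beta^q)u}{\alpha^q}$ with $u\in\F_q$ and $u^2+u+d=0$; a short computation gives $N(z)=u^2/d=u/d+1$, so $N(z)=1$ forces $u=0$ and hence $\alpha=0$, again a contradiction.

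I expect the main obstacle to be the passage from the geometric incidence condition to the arithmetic one, namely recognizing that the whole configuration collapses onto the single scalar criterion $g(b-a)\in\F_q$ via the identity in the second paragraph, and then seeing that this criterion is governed by whether the associated quadratic (whose discriminant is the unital discriminant $d$) has a root in $\mu_{q+1}$. Once the problem is framed this way, the case analysis in odd and even characteristic is a direct norm computation, and in each case the very nonsquare/absolute-trace-zero condition defining a valid Buekenhout--Metz unital is exactly what prevents such a root, so that $g(w)\notin\F_q$ for all $w\ne0$ and no O'Nan configuration can contain $P$.
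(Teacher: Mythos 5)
Your computation is correct as far as it goes: the polarization identity $(1-t)g(a)+tg(b)-g\bigl(a+t(b-a)\bigr)=t(1-t)g(b-a)$ does hold for $g(x)=\alpha x^2+\beta x^{q+1}$ precisely because $t\in\F_q$ kills the cross terms in both the quadratic and the norm part, and your reduction to whether $\alpha^q z^2-(\beta-\beta^q)z-\alpha$ (resp.\ its even-characteristic analogue) has a root on $\mu_{q+1}$, together with the norm computations showing the nonsquare/trace-zero condition on $d$ rules this out, all check out. What you have is a valid, self-contained proof for the unitals of the form \eqref{def_BM}.

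The genuine gap is one of scope. In this paper an \emph{ovoidal} Buekenhout--Metz unital means the unital arising from a cone over \emph{any} ovoid of $\PG(3,q)$; it includes not only the orthogonal unitals $\cU_{\alpha,\beta}$ of Eqn.~\eqref{def_BM} but also the Buekenhout--Tits unitals $\cU_T$ of Eqn.~\eqref{def_BT}, whose defining function $x_0+x_1\delta\mapsto f(x_0,x_1)\delta$ with $f(x_0,x_1)=x_0^{\tau+2}+x_0x_1+x_1^{\tau}$ is not of the shape ``quadratic plus norm.'' Your key identity fails for this $f$ (the term $x_0^{\tau+2}$ does not polarize the same way), so your argument does not cover the Tits case --- which is exactly the case in which the paper invokes Lemma~\ref{Spoint} (Section~4). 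The cited proof in Barwick--Ebert is instead synthetic and ovoid-independent: in the Bruck--Bose model the two blocks through the special point are (the affine parts of) two generators $m_1,m_2$ of the ovoidal cone; these span a plane through the vertex that meets the cone in exactly $m_1\cup m_2$, and an incidence argument with the two planes carrying the remaining blocks forces the sixth point of the putative configuration into that plane, hence onto $m_1\cup m_2$, a contradiction. To repair your write-up you would either need to run a separate (and structurally different) computation for $\cU_T$ and for arbitrary ovoidal cones, or replace the coordinate argument by the synthetic one.
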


\section{O'Nan configurations in orthogonal Buekenhout-Metz unitals}

In this section, we establish the following result.
\begin{thm}\label{BM_Onan}
Each nonclasical orthogonal Buekenhout-Metz unital in $\PG(2,q^2)$, $q>2$, contains an O'Nan configuration.
\end{thm}
Let $\cU_{\alpha,\beta}$ be a nonclassical orthogonal Buekenhout-Metz unital in $\PG(2,q^2)$, $q>2$, as defined in Eqn. \eqref{def_BM}. Here, we have $\alpha\ne 0$, and write $d$ as its determinant. Let $\cN$ be a putative O'Nan configuration in $\cU_{\alpha,\beta}$.  The subgroup of $\PGaL(3,q^2)$ that leaves $\cU_{\alpha,\beta}$ invariant and fixes the special points $P_{\infty}=(0,1,0)$ acts transitively on the points of $\cU_{\alpha,\beta}\setminus\{P_{\infty}\}$ by \cite[Theorem 4.12, 4.23]{Ebert2008Unitals}. Therefore we assume without of loss of generality that  $\cN$ contains the point  $P=(0,0,1)$.  Our strategy is to assume that  $\cN$ is stabilized by a properly chosen involution in the stabilizer of $\cU_{\alpha,\beta}$ in $\PGaL(3,q^2)$. This reduces the complexity of the problem significantly.

The rest of this section is devoted to the proof of Theorem \ref{BM_Onan}. We shall handle the cases of  odd and even characteristic separately. We start with some technical lemmas.
\begin{lemma}\label{BM_alpha}
Let  $\cU_{\alpha,\beta}$ be a nonclassical orhtogonal Buekenhout-Metz unital in $\PG(2,q^2)$, $q>2$.  Then $\alpha^{q+1}\ne(\lambda-\beta)^{q+1}$ and $\alpha^q y+(\lambda-\beta)y^q \neq 0$  for all $\lambda \in \F_q$ and $y \in \F_{q^2}^*$.
\end{lemma}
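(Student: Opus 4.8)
The plan is to observe that both inequalities are at heart statements about the norm map $N\colon\F_{q^2}\to\F_q$, $N(z)=z^{q+1}$: the first reduces to the discriminant condition that defines the unital, and the second reduces to the first. I would therefore prove the first inequality directly and then deduce the second by taking norms.

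First I would treat the inequality $\al^{q+1}\ne(\lambda-\beta)^{q+1}$. Since $\lambda\in\F_q$ we have $(\lambda-\beta)^q=\lambda-\beta^q$, so
\[
N(\lambda-\beta)=(\lambda-\beta)(\lambda-\beta^q)=\lambda^2-(\beta+\beta^q)\lambda+\beta^{q+1}
\]
is a monic quadratic in $\lambda$ with coefficients in $\F_q$. Hence $\al^{q+1}=(\lambda-\beta)^{q+1}$ for some $\lambda\in\F_q$ would say that the quadratic $\lambda^2-(\beta+\beta^q)\lambda+(\beta^{q+1}-\al^{q+1})$ has a root in $\F_q$. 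The crux is the identity
\[
(\beta+\beta^q)^2-4(\beta^{q+1}-\al^{q+1})=(\beta-\beta^q)^2+4\al^{q+1}=d,
\]
which shows that the discriminant of this quadratic is exactly the discriminant $d$ of $\cU_{\al,\beta}$. For $q$ odd, $d$ is a nonsquare in $\F_q$, so the quadratic has no root in $\F_q$ and we are done. For $q$ even I would divide through by $(\beta+\beta^q)^2$, which is nonzero because $\beta\notin\F_q$, to put the equation in the Artin--Schreier form $\mu^2+\mu+c=0$ with $c=\beta^{q+1}/(\beta+\beta^q)^2+d$; this is solvable in $\F_q$ precisely when $\tr_{q/2}(c)=0$. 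Since $\tr_{q/2}(d)=0$ by hypothesis, while the irreducibility over $\F_q$ of the minimal polynomial $X^2+(\beta+\beta^q)X+\beta^{q+1}$ of $\beta$ forces $\tr_{q/2}(\beta^{q+1}/(\beta+\beta^q)^2)=1$, we obtain $\tr_{q/2}(c)=1\ne0$, and again there is no root.

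Next I would derive the second inequality from the first. Suppose for contradiction that $\al^q y+(\lambda-\beta)y^q=0$ for some $\lambda\in\F_q$ and $y\in\F_{q^2}^*$. Then $\lambda-\beta=-\al^q y^{1-q}$, and taking norms gives
\[
N(\lambda-\beta)=N(-1)\,N(\al^q)\,N(y)^{1-q}=N(\al),
\]
since $N(-1)=1$, $N(\al^q)=N(\al)$ as $N(\al)\in\F_q$, and $N(y)^{1-q}=1$ because $N(y)\in\F_q^*$. This yields $(\lambda-\beta)^{q+1}=\al^{q+1}$, contradicting the first part; hence no such $\lambda,y$ exist.

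The main obstacle is the even-characteristic case of the first inequality, where one must recognize the Artin--Schreier trace criterion and extract the value $\tr_{q/2}(\beta^{q+1}/(\beta+\beta^q)^2)=1$ from the irreducibility of the minimal polynomial of $\beta$. Once the discriminant identity $(\beta-\beta^q)^2+4\al^{q+1}=d$ is established, the odd case is immediate and the second inequality follows formally by the norm computation above.
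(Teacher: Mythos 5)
Your proposal is correct and follows essentially the same route as the paper: the second inequality is reduced to the first by taking $(q+1)$-st powers (i.e.\ norms), and the first is ruled out by showing it would force the discriminant $d$ to be a square (odd $q$) or to have absolute trace $1$ (even $q$). The only cosmetic difference is in the even case, where the paper computes $\tr_{q/2}(d)=1$ by writing $\beta=b_0+b_1\delta$ in explicit coordinates, while you invoke the Artin--Schreier criterion together with the irreducibility over $\F_q$ of the minimal polynomial of $\beta$; both are valid.
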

\begin{proof}
Assume that  $\alpha^q y+(\lambda-\beta)y^q =0$ for some $\lambda \in \F_q$ and $y\in \F_{q^2}^*$. Then raising both sides of $\alpha^q y=-(\lambda-\beta)y^q$ to the $(q+1)$-st power, we deduce that $\alpha^{q+1} y^{q+1}=(\lambda-\beta)^{q+1}y^{q+1}$, i.e., $\alpha^{q+1}-(\lambda-\beta)^{q+1} =0$.

In the case $q$ is odd, the discriminant $d=(\beta-\beta^q)^2+4\alpha^{q+1}$ of $\cU_{\alpha,\beta}$ equals
\[
d=(\beta-\beta^q)^2+4(\lambda-\beta)^{q+1}=(\beta+\beta^q-2\lambda)^2,
\]
which contradicts the fact that $d$ is a nonsquare.
	
In the case $q$ is even, write $\beta=b_0+b_1\delta$ with $b_0,\,b_1\in\F_q$, where $\delta\in\F_{q^2}^*$ with $\delta+\delta^q=1$. The quantity $v=\delta^2+\delta=\delta^{q+1}$ has absolute trace $1$ by \cite[Lemma 4.21]{Ebert2008Unitals}.   The discriminant $d=\alpha^{q+1}/(\beta+\beta^q)^2$ equals
\[
d=\frac{(\beta+\lambda)^{q+1}}{(\beta+\beta^q)^2}
=\frac{(b_0+\lambda+b_1\delta)^{q+1}}{b_1^2}=\frac{(b_0+\lambda)^2}{b_1^2}+\frac{b_0+\lambda}{b_1}+v.
\]
Taking the absolute trace on both sides, we have $\tr_{q/2}(d)=1$, which is again a contradiction. This completes the proof.
\end{proof}

For $y\in \F_{q^2}^*$, let $\ell_{y}$ be the line
\[
[y,1,0]=\{x\in\PG(2,q^2):\,x \cdot (y,1,0)=0\},
\]
where $\cdot$ is the usual Euclidean inner product. This line contains the point $P=(0,0,1)$ of $\cU_{\alpha,\beta}$. The tangent line of $\cU_{\alpha,\beta}$ at $P$ is $[0,1,0]$, so the set $B_y:=\ell_y\cap\cU_{\alpha,\beta}$ consists of $q+1$ points and is a block of the corresponding unital design. Also, the line $\ell_{\infty}=[1,0,0]$ corresponds to the block
$B_{\infty}=\{(0,r,1):r\in\F_q\}\cup\{(0,1,0)\}$.

\begin{lemma}\label{BM_line}
For each  $y \in \F_{q^2}^*$, we have
\[
B_y=\left\lbrace(x_\lambda,-x_\lambda y,1): x_\lambda =-\frac{\alpha^q y+(\lambda-\beta)y^q}{\alpha^{q+1}-(\lambda-\beta)^{q+1}},\ \lambda \in \F_q\right\rbrace \cup\{(0,0,1)\}.
\]
\end{lemma}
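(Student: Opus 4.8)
The plan is to describe $B_y$ as the set of affine points of $\cU_{\alpha,\beta}$ incident with $\ell_y$, and then to exhibit these solutions explicitly. First I would observe that the special point $P_\infty=(0,1,0)$ is not incident with $\ell_y$ when $y\ne 0$, so every point of $B_y$ is of the form $(x,\alpha x^2+\beta x^{q+1}+r,1)$ with $x\in\F_{q^2}$ and $r\in\F_q$. The incidence condition $x\cdot y+(\alpha x^2+\beta x^{q+1}+r)=0$ forces $r=-\bigl(xy+\alpha x^2+\beta x^{q+1}\bigr)$ and simultaneously shows the second coordinate equals $-xy$; hence the point is $(x,-xy,1)$, matching the asserted shape. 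Consequently $(x,-xy,1)$ lies in $B_y$ if and only if $w(x):=xy+\alpha x^2+\beta x^{q+1}\in\F_q$, and the value $x=0$ yields exactly the listed point $(0,0,1)$. It therefore remains to show that the nonzero solutions of $w(x)\in\F_q$ are precisely the $x_\lambda$, $\lambda\in\F_q$.

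Since $\F_q$ is the fixed field of the Frobenius $z\mapsto z^q$ inside $\F_{q^2}$, the condition $w(x)\in\F_q$ is $w(x)=w(x)^q$. Rather than attacking this equation head-on, I would verify the candidate family. Write $\mu:=\lambda-\beta$, so that $\mu+\beta=\lambda$ and $N:=\alpha^{q+1}-\mu^{q+1}=\alpha^{q+1}-(\lambda-\beta)^{q+1}$; by Lemma \ref{BM_alpha} we have $N\in\F_q^*$ (so $x_\lambda$ is well defined) and the numerator $\alpha^q y+\mu y^q\ne 0$ (so $x_\lambda\ne 0$). The defining equation of $x_\lambda$ reads $Nx_\lambda=-(\alpha^q y+\mu y^q)$, and applying Frobenius together with $\alpha^{q^2}=\alpha$, $y^{q^2}=y$, $N^q=N$ gives the companion relation $Nx_\lambda^q=-(\mu^q y+\alpha y^q)$.

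The crux is to read these two identities as a linear system in the unknowns $y,y^q$: its coefficient determinant is exactly $\alpha^{q+1}-\mu^{q+1}=N\ne 0$, so Cramer's rule solves it and yields the clean expression $y=\mu x_\lambda^q-\alpha x_\lambda$. Substituting this into $w(x_\lambda)$, the term $x_\lambda y=\mu x_\lambda^{q+1}-\alpha x_\lambda^2$ cancels $\alpha x_\lambda^2$ and leaves
\[
w(x_\lambda)=(\mu+\beta)\,x_\lambda^{q+1}=\lambda\,x_\lambda^{q+1}\in\F_q,
\]
since $\lambda\in\F_q$ and the norm $x_\lambda^{q+1}\in\F_q$. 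Thus each $(x_\lambda,-x_\lambda y,1)$ lies in $B_y$. I expect this collapse of $w(x_\lambda)$ to the single term $\lambda x_\lambda^{q+1}$ to be the main obstacle: it is what makes the whole identity work, and it hinges on setting up and inverting the $2\times 2$ system correctly (and on both conclusions of Lemma \ref{BM_alpha}).

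Finally I would account for the count. The identity $w(x_\lambda)=\lambda x_\lambda^{q+1}$ together with $x_\lambda\ne 0$ shows the map $\lambda\mapsto x_\lambda$ is injective on $\F_q$, so $\{(x_\lambda,-x_\lambda y,1):\lambda\in\F_q\}$ consists of $q$ distinct points, each different from $(0,0,1)$ because $x_\lambda\ne 0$. Adjoining $(0,0,1)$ produces $q+1$ distinct points of $B_y$; since $\ell_y$ is a secant line and hence $|B_y|=q+1$, these exhaust $B_y$, giving the claimed equality. The argument is uniform in the characteristic, as the only facts used—that $\F_q$ is the Frobenius-fixed field and that $N\ne0$ and $x_\lambda\ne0$—hold for both odd and even $q$.
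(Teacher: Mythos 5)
Your proof is correct and rests on the same computation as the paper's: conjugate the linear relation between $x_\lambda$ and $y$ by the Frobenius $z\mapsto z^q$ and solve the resulting $2\times 2$ system whose determinant is $\alpha^{q+1}-(\lambda-\beta)^{q+1}\ne 0$ (Lemma \ref{BM_alpha}), then finish by the count $|B_y|=q+1$. The only difference is direction: the paper starts from a point of $B_y$, sets $\lambda=-rx^{-(q+1)}\in\F_q$ and solves for $x$, whereas you start from $x_\lambda$, recover $y=(\lambda-\beta)x_\lambda^q-\alpha x_\lambda$ and verify membership via $w(x_\lambda)=\lambda x_\lambda^{q+1}\in\F_q$ --- the two inclusions are interchangeable here and both are completed by the same counting step.
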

\begin{proof}
The point $(x,\alpha x^2+\beta x^{q+1}+r,1)$  with $x \neq 0$, $r\in\F_q$ lies in the block $B_y$ if and only if $yx+\alpha x^2+\beta x^{q+1}+r =0$. Write	$\lambda:=-rx^{-(q+1)}\in\F_q$ so that we have $y+\alpha x+\beta x^{q} =\lambda x^{q}$.
Raising  both sides to the $q$-th power, we get $y^q+\alpha^q x^q+\beta^q x =\lambda^q x$. It is now routine to deduce that $x =-\frac{\alpha^q y+(\lambda-\beta)y^q}{\alpha^{q+1}-(\lambda-\beta)^{q+1}}$.  The claim now follows.
\end{proof}

\subsection{The odd characteristic case}

Consider the following involutionary central collineation of $\textup{PGL}(3,q^2)$:
 $$\sigma: (x,y,z)\mapsto (-x,y,z).$$
It stabilizes the unital $\cU_{\alpha,\beta}$, fixes the set $B_{\infty}$ pointwise and maps $B_y$ to $B_{-y}$ for $y\in \F_{q^2}^*$ by a direct check. We now assume that the putative O'Nan configuration $\cN$ contains two lines $\ell_1$, $\ell_{-1}$ through $P=(0,0,1)$. Let $\ell'$, $\ell''=\sigma(\ell')$ be the other two lines in $\cN$, and assume that $\ell'\cap\ell''=(0,r,1)\in \cU_{\alpha,\beta}$ for some $r\in \F_q$ to be determined. The O'Nan configuration $\cN$ that we are seeking for  is fixed by the involution $\sigma$, see Figure \ref{BMO}.
\begin{figure}[htbp]
\centering
\includegraphics[width=12cm]{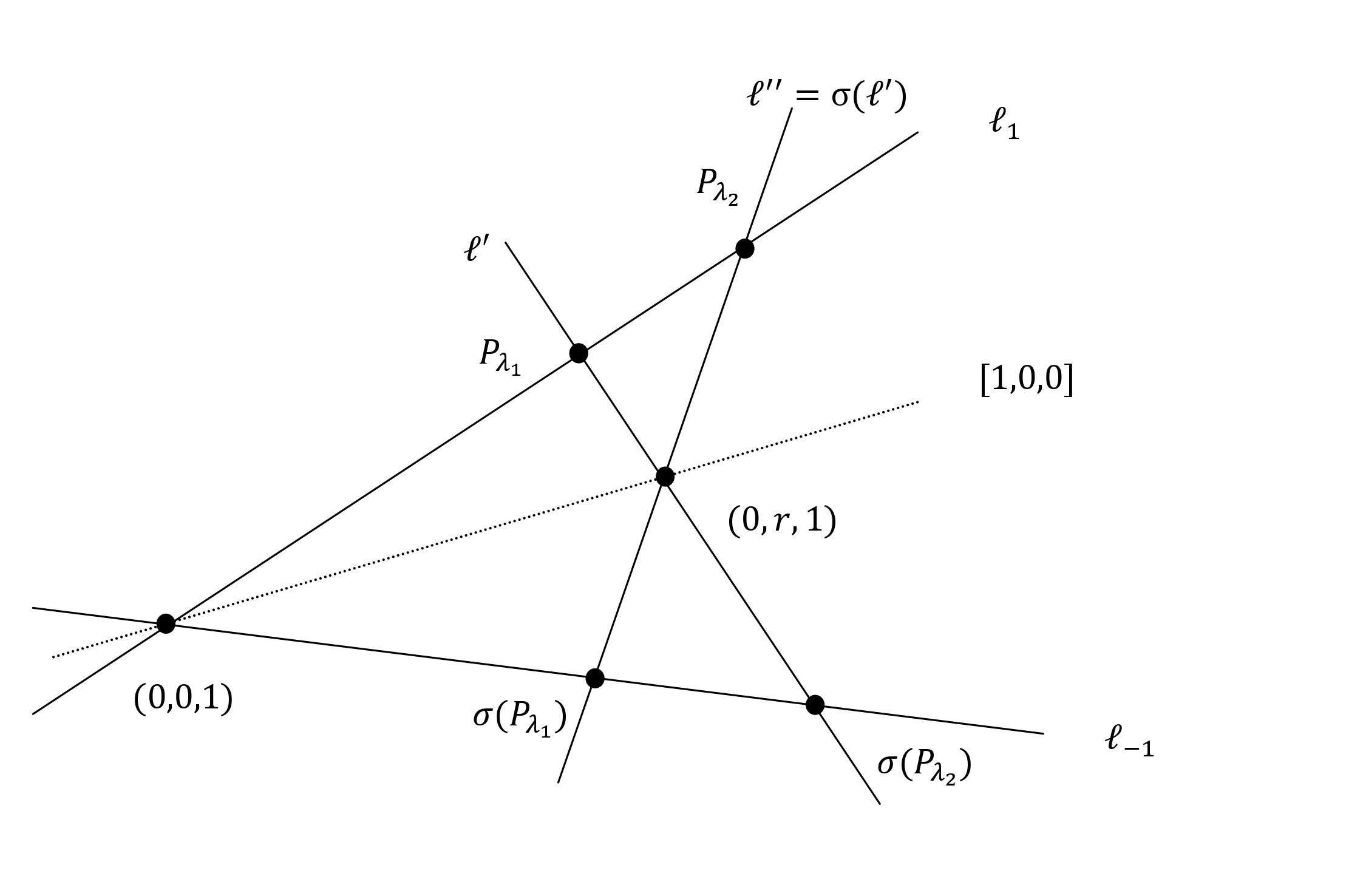}
\caption{A putative O'Nan configuration in $\cU_{\alpha,\beta}$ when $q$ is odd}\label{BMO}
\end{figure}
By Lemma \ref{BM_line}, the intersection point of $\ell_1$ and $\ell'$ is
\[
P_{\lambda_1}=(x_{\lambda_1},x_{\lambda_1},1) ,\quad x_{\lambda_1}=-\frac{\alpha^q +\lambda_1-\beta}{\alpha^{q+1}-(\lambda_1-\beta)^{q+1}}
\]
for some $\lambda_1\in\F_q$. Similarly, the intersection of $\ell_1$ with $\ell''$ is
\[
P_{\lambda_2}=(x_{\lambda_2},x_{\lambda_2},1),\quad x_{\lambda_2}=-\frac{\alpha^q +\lambda_2-\beta}{\alpha^{q+1}-(\lambda_2-\beta)^{q+1}}
\]
for some $\lambda_2 \in \F_q$ with $\lambda_1\ne\lambda_2$.  It follows that $\sigma(P_{\lambda_1})$ and $\sigma(P_{\lambda_2})$ are the intersection points of $\ell''$ and $\ell'$ with $\ell_{-1}=\sigma(\ell_1)$ respectively. Therefore,
\[
 \ell'=\langle (x_{\lambda_1},x_{\lambda_1},1) ,(-x_{\lambda_2},x_{\lambda_2},1)\rangle,\quad \ell''=\langle (x_{\lambda_2},x_{\lambda_2},1) ,(-x_{\lambda_1},x_{\lambda_1},1)\rangle.
\]
We directly compute that $\ell'\cap \ell''=(0,r,1)$ with
\[
 r=\frac{2x_{\lambda_1}x_{\lambda_2}}{x_{\lambda_1}+x_{\lambda_2}}.
\]
The condition $r\in\F_q$, i.e., $r=r^q$, amounts to $x_{\lambda_1}^{q+1}\ (x_{\lambda_2}- x_{\lambda_2}^q)+(x_{\lambda_1}-x_{\lambda_1}^q)\ x_{\lambda_2}^{q+1} =0 $.  Plugging in the expressions for $x_{\lambda_1},\,x_{\lambda_2}$, we deduce that
\begin{equation}\label{ON_odd}
 \left(h(\lambda_1)g(\lambda_2)+h(\lambda_2)g(\lambda_1)\right) (\alpha^q-\alpha+\beta-\beta^q) =0,
\end{equation}
with $g(X)= -X^2+\alpha^{q+1}-\beta^{q+1}$  and  $h(X)= (X+\alpha^q-\beta)^{q+1}.$

Since $d=(\beta-\beta^q)^2+4\alpha^{q+1}$ is a nonsquare in $\F_q$, we have $\alpha^q-\alpha+\beta-\beta^q \neq 0$: otherwise, $d=(\alpha+\alpha^q)^2$ would be a square.
We thus can get rid of the second factor on the left hand side in Eqn. \eqref{ON_odd}. Observe that $g(x) \neq 0$ and $h(x)\neq 0$ for any $x\in\F_q$ by Lemma \ref{BM_alpha} with $y=1$. Now we define
$$\kappa(x):=\frac{h(x)}{g(x)}, \quad x \in\F_q.$$
Observe that $\kappa(x)$ is nonzero in $\F_q$ for each $x\in\F_q$. Eqn. \eqref{ON_odd} now reduces to
\begin{equation}\label{eqn_l1l2}
\kappa(\lambda_1)+\kappa(\lambda_2)=0.
\end{equation}
Observe that if Eqn. \eqref{eqn_l1l2} holds, then necessarily $\lambda_1\ne\lambda_2$: otherwise, we have $\kappa(\lambda_1)=\kappa(\lambda_2)=0$, which is impossible.

To summarize, we have established the following result.
\begin{lemma}
Suppose that $\alpha\ne 0$ and $q$ is ood.
If there are elements $\lambda_1,\,\lambda_2$ in $\F_q$ such that $\kappa(\lambda_1)+\kappa(\lambda_2)=0$, then there is an O'Nan configuration $\cN$ in the unital $\cU_{\alpha,\beta}$.
\end{lemma}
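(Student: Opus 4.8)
The plan is to run the preceding computation backwards and then check that the resulting figure is non-degenerate. Assume $\lambda_1,\lambda_2\in\F_q$ satisfy $\kappa(\lambda_1)+\kappa(\lambda_2)=0$; as observed after Eqn.~\eqref{eqn_l1l2} this already forces $\lambda_1\ne\lambda_2$. With $x_{\lambda_1},x_{\lambda_2}$ as in Lemma~\ref{BM_line}, I would set up the four lines
\[
\ell_1=\langle P,P_{\lambda_1}\rangle,\quad \ell_{-1}=\sigma(\ell_1),\quad \ell'=\langle P_{\lambda_1},\sigma(P_{\lambda_2})\rangle,\quad \ell''=\langle P_{\lambda_2},\sigma(P_{\lambda_1})\rangle,
\]
together with their six pairwise intersection points, namely $P=(0,0,1)$, the two points $P_{\lambda_i}=(x_{\lambda_i},x_{\lambda_i},1)$, their images $\sigma(P_{\lambda_i})=(-x_{\lambda_i},x_{\lambda_i},1)$, and $Q=\ell'\cap\ell''$. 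Since four lines have exactly $\binom{4}{2}=6$ pairwise intersections, it suffices to prove that these six points are distinct, that they all lie on $\cU_{\alpha,\beta}$, and that the four lines are distinct.

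The key non-degeneracy input, and the step I expect to be the main obstacle, is to show that $x_{\lambda_1}+x_{\lambda_2}\ne 0$. I would establish this from Lemma~\ref{Spoint}, independently of the hypothesis on $\kappa$: if instead $x_{\lambda_2}=-x_{\lambda_1}$ with $x_{\lambda_1}\ne 0$ (the value $x_{\lambda_i}\ne 0$ being guaranteed by Lemma~\ref{BM_alpha} with $y=1$), then $\ell'$ and $\ell''$ degenerate into the two distinct lines $\{(x_{\lambda_1},t,1):t\in\F_{q^2}\}\cup\{P_{\infty}\}$ and $\{(-x_{\lambda_1},t,1):t\in\F_{q^2}\}\cup\{P_{\infty}\}$, both passing through the special point $P_{\infty}=(0,1,0)$. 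One checks that $\ell_1,\ell_{-1},\ell',\ell''$ are still four distinct lines and that their six intersection points remain distinct and all lie on $\cU_{\alpha,\beta}$; but now one of them is $P_{\infty}$, producing an O'Nan configuration through the special point and contradicting Lemma~\ref{Spoint}. Hence $x_{\lambda_1}+x_{\lambda_2}\ne 0$.

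With $x_{\lambda_1}+x_{\lambda_2}\ne 0$ in hand, $Q=(0,r,1)$ with $r=\tfrac{2x_{\lambda_1}x_{\lambda_2}}{x_{\lambda_1}+x_{\lambda_2}}$ is well defined and, since $x_{\lambda_1},x_{\lambda_2}\ne 0$ and $q$ is odd, $r\ne 0$. Reversing the manipulation that produced Eqn.~\eqref{ON_odd}, the hypothesis $\kappa(\lambda_1)+\kappa(\lambda_2)=0$ is precisely the condition $r=r^q$, i.e.\ $r\in\F_q$; this is the only place the hypothesis enters. Consequently $Q$ lies on the block $B_{\infty}$ and hence on $\cU_{\alpha,\beta}$. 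The remaining five points lie on $\cU_{\alpha,\beta}$ as well: $P_{\lambda_1},P_{\lambda_2}$ are points of the block $B=\ell_1\cap\cU_{\alpha,\beta}$ by Lemma~\ref{BM_line}, their images are on $\cU_{\alpha,\beta}$ because $\sigma$ stabilizes the unital, and $P=(0,0,1)\in\cU_{\alpha,\beta}$.

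Finally I would record distinctness. Using $x_{\lambda_1},x_{\lambda_2}\ne 0$, $x_{\lambda_1}\ne x_{\lambda_2}$ (which holds because $\lambda\mapsto x_\lambda$ is injective on the block and $\lambda_1\ne\lambda_2$) and the fact that $q$ is odd, the four points $P_{\lambda_i},\sigma(P_{\lambda_i})$ are pairwise distinct and, having nonzero first coordinate, differ from $P$ and $Q$; moreover $P\ne Q$ since $r\ne 0$. The four lines are then forced to be distinct: for instance $\ell'=\ell_1$ would require $\sigma(P_{\lambda_2})\in\ell_1$, impossible as its first two coordinates differ, while $\ell'=\ell''$ would put both $P_{\lambda_1}$ and $P_{\lambda_2}$ on $\ell'$ and hence give $\ell'=\ell_1$, a contradiction; the remaining pairs are ruled out identically. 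This exhibits four distinct lines meeting in six distinct points of $\cU_{\alpha,\beta}$, that is, an O'Nan configuration $\cN$, completing the proof.
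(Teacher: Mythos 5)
Your proposal is correct and follows essentially the same route as the paper: the lemma there is just a summary of the preceding computation with the involution $\sigma$, the lines $\ell_{1},\ell_{-1},\ell',\ell''$, and the reduction of the condition $r=r^q$ to $\kappa(\lambda_1)+\kappa(\lambda_2)=0$. The only difference is that you explicitly verify the non-degeneracy conditions the paper leaves implicit; in particular, your use of Lemma \ref{Spoint} to rule out $x_{\lambda_1}+x_{\lambda_2}=0$ is a worthwhile addition, since the paper's formula $r=2x_{\lambda_1}x_{\lambda_2}/(x_{\lambda_1}+x_{\lambda_2})$ silently assumes this.
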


By Lemma \eqref{BM_pj}, the unitals $\cU_{\alpha,\beta}$ and $\cU_{\alpha,\beta+u}$ are projectively equivalent for any $u\in\F_q$. Therefore, by replacing $\beta$ with $\beta+u$ for some $u\in\F_q$ if necessary, we assume that
\begin{equation}\label{eqn_assumptr}
\tr_{q^2/q}(\alpha-\beta) \ne 0.
\end{equation}
Set $K:=\{\kappa(x):\,x\in\F_q\}$. Take $k \in K$,  and consider the  equation $ h(X)-kg(X)=0$, i.e.,
\begin{equation} \label{ON_odd1}
(1+k)X^2-\big(\tr_{q^2/q}(\alpha-\beta)\big) X+(\alpha^q-\beta)^{q+1}-k(\alpha^{q+1}-\beta^{q+1})=0.
 \end{equation}
This is a polynomial of degree at most $2$, and has degree at least $1$ by the assumption \eqref{eqn_assumptr}. Since $k$ is in $K$, Eqn. \eqref{ON_odd1} has $1$ or $2$ solutions in $\F_q$. Therefore, we have $|K|\geq \lceil\frac{q}{2}\rceil=\frac{q+1}{2}$. It follows that
\[
|K\cap -K|\ge|K|+|-K|-q\ge1,
\]
where $-K=\{-k:\,k\in K\}$. Therefore, there is at least one pair $(\lambda_1,\,\lambda_2)\in\F_q^2$ such that $\kappa(\lambda_1)+\kappa(\lambda_2)=0$.

To summarize, we have now established the existence of O'Nan configurations of the form in Figure \ref{BMO} in the nonclassical unital $\cU_{\alpha,\beta}$'s in the case $q$ is odd.

\subsection{The even characteristic case}
We now consider the case $q$ is even, $q>2$. Take $\delta\in\F_{q^2}^*$ with $\delta+\delta^q=1$, and set $v=\delta^{q+1}\in\F_q$ which has absolute trace $1$ by \cite[Lemma 4.21]{Ebert2008Unitals}. Observe that there is some freedom in the choice of $\delta$ which we will explore below. The unital $\cU_{\alpha,\beta}$ is nonclassical implies that $\alpha\ne 0$ and $\beta\not\in\F_q$. It is routine to see that there are elements $s\in\F_{q^2}^*$, $f\in\F_q^*$ and $u\in\F_q$ such that $\alpha'=\alpha s^2 f \in\F_q^*$ and $\beta'=(\beta s^{q+1}+u)f= \delta$.
 By Lemma \ref{BM_pj}, we assume without loss of generality that the parameters of the unital $\cU_{\alpha,\beta}$ in consideration satisfies that $\alpha=a\in \F_q^*, \,\beta =\delta$.
\begin{lemma}
There exists $\eta\in\F_{q^2}$ such that $\eta+\eta^q=1$,  $\tr_{q/2}\left(\frac{a^2}{\eta+\eta^2}\right)=1$ for a fixed $a\in\F_q^*$.
\end{lemma}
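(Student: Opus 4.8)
The plan is to transfer the whole question to the base field $\F_q$ and then settle it with a Kloosterman sum estimate. The starting observation is that $\eta+\eta^q=1$ is the same as $\tr_{q^2/q}(\eta)=1$, and in characteristic $2$ this forces $\eta^q=1+\eta$, so that
\[
\eta+\eta^2=\eta(1+\eta)=\eta\,\eta^q=\eta^{q+1}\in\F_q,
\]
a nonzero element of $\F_q$ (as $\eta\ne 0$). Thus $a^2/(\eta+\eta^2)=a^2/\eta^{q+1}$ already lies in $\F_q$, and the condition to be arranged is simply $\tr_{q/2}\!\big(a^2/\eta^{q+1}\big)=1$.

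First I would pin down which elements of $\F_q$ occur as $\eta+\eta^2$. Fixing $\delta$ with $\delta+\delta^q=1$ and $v=\delta^{q+1}=\delta+\delta^2$ (which has $\tr_{q/2}(v)=1$ by \cite[Lemma 4.21]{Ebert2008Unitals}), every admissible $\eta$ is $\eta=\delta+c$ with $c\in\F_q$, and then $\eta+\eta^2=v+(c+c^2)$. Since the Artin--Schreier map $c\mapsto c+c^2$ is $\F_2$-linear with kernel $\F_2$ and image exactly $\ker(\tr_{q/2})$, the value $\eta+\eta^2$ ranges over the full coset $v+\ker(\tr_{q/2})$, i.e.\ over \emph{all} elements of $\F_q$ of absolute trace $1$. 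Consequently it suffices to produce a single $w\in\F_q$ with $\tr_{q/2}(w)=1$ and $\tr_{q/2}(a^2/w)=1$; any $\eta$ with $\eta+\eta^2=w$ then does the job.

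For the existence of such a $w$ I would argue by counting with the canonical additive character $\psi(x)=(-1)^{\tr_{q/2}(x)}$. Writing each trace condition as $\tfrac12(1-\psi(\cdot))$ and expanding, the number $N$ of $w\in\F_q^*$ meeting both conditions is
\[
N=\sum_{w\in\F_q^*}\frac{1-\psi(w)}{2}\cdot\frac{1-\psi(a^2/w)}{2}
=\frac14\Big[(q+1)+\sum_{w\in\F_q^*}\psi\big(w+a^2/w\big)\Big],
\]
using $\sum_{w\ne0}\psi(w)=\sum_{w\ne0}\psi(a^2/w)=-1$. The remaining sum is a Kloosterman sum, and the Weil bound gives $\big|\sum_{w}\psi(w+a^2/w)\big|\le 2\sqrt q$. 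Hence $N\ge\frac14\big(q+1-2\sqrt q\big)=\frac14(\sqrt q-1)^2>0$, so at least one such $w$ exists (recall $q\ge 4$).

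The routine parts are the identity $\eta+\eta^2=\eta^{q+1}$ and the surjectivity onto the trace-$1$ elements; the real content is the last step, where the estimate rests on the Weil bound for Kloosterman sums rather than on any elementary manipulation. I would flag that as the main point to get right, in particular verifying that the error term $2\sqrt q$ is beaten by the main term $q+1$ for every even $q>2$ (it is, since $q\ge 4$).
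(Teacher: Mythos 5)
Your proof is correct, but it takes a genuinely different and much heavier route than the paper. The paper's own argument is a two-line cardinality comparison: the value set $S=\{\eta+\eta^2:\eta\in\F_{q^2},\ \eta+\eta^q=1\}$ has size $q/2$ (the map $\eta\mapsto\eta+\eta^2$ is $2$-to-$1$ on the coset $\delta+\F_q$) and avoids $0$, while the set of nonzero $w\in\F_q$ with $\tr_{q/2}(a^2/w)=0$ has size only $q/2-1$; hence some $w\in S$ has $\tr_{q/2}(a^2/w)=1$, and no information about \emph{which} elements make up $S$ is needed. You instead pin down $S$ exactly as the full set of trace-one elements of $\F_q$ (which is correct, via $\eta=\delta+c$ and the Artin--Schreier image of $c\mapsto c+c^2$), and are then forced to satisfy two simultaneous trace conditions on $w$, which you resolve with the Weil bound for Kloosterman sums, $\bigl|\sum_{w\in\F_q^*}\psi(w+a^2/w)\bigr|\le 2\sqrt q$, giving $N\ge\frac14(\sqrt q-1)^2>0$ for $q\ge 4$. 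Every step checks out, and your argument buys more than the lemma asks for --- roughly $q/4$ admissible values $w$, together with the precise description of $S$ --- but it invokes a deep estimate where simple pigeonholing suffices; the paper's count exploits precisely the fact that only the single condition $\tr_{q/2}(a^2/w)=1$ needs to be arranged once $w$ is known to range over a set of size $q/2$ not containing $0$.
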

\begin{proof}
The set $\{x+x^2:\,x\in\F_{q^2},\,x+x^q=1\}$ has size $q/2$ and does not contain $0$. Meanwhile, $\tr_{q/2}(a^2X)=0$ has $q/2-1$ nonzero solutions, and the claim follows by comparing the two sizes.
\end{proof}
Take $\delta'\in\F_{q^2}$ such that  $\delta'+\delta'^q=1$,  $\tr_{q/2}\left(\frac{a^2}{\delta'+\delta'^2}\right)=1$. Then $\delta'$ and $\delta$ differ by an element of $\F_q$, and the unital $\cU_{a,\delta}$ is isomorphic to $\cU_{a,\delta'}$ by Lemma \ref{BM_pj}. Therefore, we only need to consider the unitals $\cU_{\alpha,\beta}$ whose parameters satisfy that
\begin{equation}\label{ass_BMe}
\alpha=a\in\F_q^*,\quad\beta=\delta
\end{equation}
such that $\tr_{q/2}\left(\frac{a^2}{v}\right)=1$, where $v=\delta^{q+1}=\delta^2+\delta$.

Take the following involution in $\PGaL(3,q^2)$:
\begin{equation}\label{EBM_inv}
  \phi:\,(x,y,1)\mapsto (x^q,y^q,1).
 \end{equation}
It stabilizes the unital $\cU_{\alpha,\beta}$, fixes the block $B_{\infty}$ pointwise and maps $B_{y}$ to $B_{y^q}$ for $y\in \F_{q^2}^*$.
Suppose that the putative O'Nan configuration $\cN$ contains two lines $\ell_{\delta},\,\ell_{\delta^{q}}$ through $P=(0,0,1)$, and assume that the other two lines $\ell',\,\ell''$  in $\cN$ satisfy that $\ell'=\phi(\ell')$, $\ell''=\phi(\ell'')$ and $\ell'\cap\ell''=(0,r,1)\in\cU_{\alpha,\beta}$ for some $r\in \F_q\setminus\{0\}$. Please notice the distinction with the odd characteristic case.

Let  $P_{\lambda}=(x_{\lambda},\delta x_{\lambda},1)$ be a point in the block $B_{\delta}$ for  some $\lambda \in\F_q$, where
\[
x_{\lambda}=\displaystyle\frac{a\delta+(\lambda+\delta)\delta^q}{a^2+(\lambda+\delta)^{q+1}}
=\frac{\lambda+v+(a+\lambda)\delta}{a^2+\lambda^2+\lambda+v}.
\]
The point $\phi(P_{\lambda})$ lies in $B_{\delta^q}=\phi(B_\delta)$. We deduce that the intersection point of $\ell_{\infty}$ and $P_{\lambda}\phi(P_{\lambda})$ is
\[
\left(0,\frac{x_{\lambda}^{q+1}}{x_{\lambda}+x_{\lambda}^q},1\right)
=\left(0,\frac{\lambda^2v+\lambda(a+v)+av+v^2+a^2v}
{(a^2+\lambda^2+\lambda +v)(\lambda +a )},1 \right).
\]
If we can find two distinct solutions $\lambda_1,\,\lambda_2\in\F_q$ for the equation
\begin{equation}\label{EBM_eq}
r^{-1}=G(X):=\frac{(a^2+X^2+X +v)(X +a )}{X^2v+X(a+v)+av+v^2+a^2v}
 \end{equation}
for some $r\in\F_q^*$, then the lines $B_{\delta}$, $B_{\delta^q}$, $\ell'=P_{\lambda_1}\phi(P_{\lambda_1})$ and $\ell''=P_{\lambda_2}\phi(P_{\lambda_2})$ form an O'Nan configuration of the prescribed form in the unital $\cU_{a,\beta}$.

We note that $G(x)$ lies in $\F_q$ for all $x\in\F_q$, since all its coefficients are in $\F_q$. We compute that $G(v)=\frac{a+v}{v}$, and deduce from $G(Z+a)=G(v)$ that
\[
(Z+a+v)\left(Z^2+Z+\frac{a^2}{v}+v\right)=0.
\]
Since $\tr_{q/2}\left(\frac{a^2}{v}+v\right)=1+1=0$, the second factor has two distinct solutions. Therefore, if $a+v\ne 0$, then take $r=\frac{v}{v+a}$ and $r^{-1}=G(X)$ has at least two distinct solutions in $\F_q$. Finally, we observe that $a=v$ can not occur: the discriminant $d=a$ has absolute trace $0$ while $v$ has absolute trace $1$.

To conclude, we have established the existence of O'Nan configurations of the prescribed form in the nonclassical unital $\cU_{\alpha,\beta}$'s in the even characteristic case.

\section{O'Nan configurations in Buekenhout-Tits unitals}
In this section, we establish the following result.
\begin{thm}\label{BT}
Each Buekenhout-Tits unital in $\PG(2,q^2)$ contains an O'Nan configuration.
\end{thm}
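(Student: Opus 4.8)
The plan is to mimic the successful strategy from Section 3 (even characteristic case) as closely as possible: find an involution in the stabilizer of $\cU_T$, look for an O'Nan configuration symmetric under that involution, and reduce the existence question to finding two distinct solutions $\lambda_1,\lambda_2$ of a single equation $r^{-1}=G(X)$ over $\F_q$ for some admissible $r\in\F_q^*$. Since $q=2^m$ is even here as well, the natural candidate involution is again the Frobenius-type map $\phi:(x,y,1)\mapsto(x^q,y^q,1)$, which should stabilize $\cU_T$ and fix the distinguished block $B_\infty$ pointwise.

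**First** I would set up the block structure. Exactly as in Lemma \ref{BM_line}, for $y\in\F_{q^2}^*$ I would parametrize the block $B_y=\ell_y\cap\cU_T$ where $\ell_y=[y,1,0]$ passes through $P=(0,0,1)$; a point $(x_0+x_1\delta,\,r+f(x_0,x_1)\delta,1)$ lies on $\ell_y$ precisely when $y(x_0+x_1\delta)+r+f(x_0,x_1)\delta=0$, which splits into two $\F_q$-equations (the $\F_q$-part and the $\delta$-part, using $\delta+\delta^q=1$ or $\delta^2=\delta+v$). Solving these should express the block points as a rational parametrization by a single parameter $\lambda\in\F_q$, using Lemma \ref{fxy} to invert $f$. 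The key role of Lemma \ref{fxy} is presumably to control the coordinate $x$ of block points and produce a clean rational function $G$.

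**Next**, following the even BM case, I would fix two $\phi$-conjugate lines $\ell_{y_0},\ell_{y_0^q}$ through $P$, take the other two lines of $\cN$ to be $\phi$-invariant lines $P_\lambda\phi(P_\lambda)$ and $P_{\lambda'}\phi(P_{\lambda'})$, and compute their common point $\ell'\cap\ell''$ on $\ell_\infty$, which by symmetry has the shape $\bigl(0,\,x_\lambda^{q+1}/(x_\lambda+x_\lambda^q),1\bigr)$. Setting this equal to $(0,r,1)$ yields $r^{-1}=G(\lambda)$ for an explicit rational $G\in\F_q(X)$. The existence of an O'Nan configuration then follows if $G$ takes some value $r^{-1}$ at two distinct points of $\F_q$. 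To guarantee this I would hunt for a symmetry of $G$ analogous to $G(Z+a)=G(v)$ in the BM case — that is, an affine substitution $X\mapsto X+c$ under which $G$ is invariant, forcing a factorization $(X+\text{linear})\bigl(X^2+X+\mu\bigr)$ with $\tr_{q/2}(\mu)=0$, so that the quadratic factor splits. The condition $\tr_{q/2}(\mu)=0$ should come out of the Tits-ovoid/trace hypotheses ($m$ odd, $f$'s structure, Lemma \ref{fxy}), mirroring how $\tr_{q/2}(a^2/v+v)=0$ was forced there.

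**The main obstacle** will be the explicit computation of the block parametrization and of $G$: the function $f(x_0,x_1)=x_0^{\tau+2}+x_0x_1+x_1^\tau$ with $\tau=2^{(m+1)/2}$ is not a simple norm form like $\beta x^{q+1}$, so inverting the block condition is genuinely harder than in the orthogonal case, and this is exactly where Lemma \ref{fxy} must be used to keep $G$ rational and tractable. The secondary difficulty is verifying the trace condition that makes the relevant quadratic split; this will likely require the identity in Lemma \ref{fxy} together with the fact that $\tau^2\equiv 2\pmod{q-1}$ and properties of $\tr_{q/2}$ on Tits ovoids. I would expect the configuration to be forced through a fixed Baer subline (as the theorem's section heading suggests, \emph{O'Nan configurations of a particular form that contains a fixed Baer subline}), so I would take care to identify which block plays the role of that Baer subline and check it is genuinely a $(q+1)$-secant rather than degenerating.
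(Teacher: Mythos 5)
Your central idea --- to find an O'Nan configuration stabilized by the Frobenius involution $\phi:(x,y,1)\mapsto(x^q,y^q,1)$, as in the even-characteristic orthogonal case --- is exactly the approach the paper explicitly rules out: the authors remark that looking for O'Nan configurations stabilized by an involution in the stabilizer of $\cU_T$ ``is impossible after some short analysis,'' and they change strategy entirely. One can already see the problem at the first step of your plan: with $\delta+\delta^q=1$ the map $\phi$ sends $(x_0+x_1\delta,\,r+f(x_0,x_1)\delta,\,1)$ to $(x_0+x_1+x_1\delta,\,r+f(x_0,x_1)+f(x_0,x_1)\delta,\,1)$, so for $\phi$ to preserve $\cU_T$ one would need $f(x_0+x_1,x_1)=f(x_0,x_1)$ identically, which fails because $f(x_0,x_1)=x_0^{\tau+2}+x_0x_1+x_1^{\tau}$ is not a norm form (unlike $\beta x^{q+1}$, it has no such translation symmetry). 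The stabilizer of $\cU_T$ is much smaller than that of $\cU_{\alpha,\beta}$, and no suitable involution is available, so the reduction to a single equation $r^{-1}=G(\lambda)$ with a $\phi$-symmetric pair of lines through $(0,0,1)$ never gets off the ground.

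The paper's actual configuration is asymmetric: it consists of the lines $\ell_\infty$, $\ell_c$ (both through $(0,0,1)$, with $c\in\F_q^*$ chosen so that $\tr_{q/2}(c^{\tau+2}+c+1)=0$) and two further lines $\ell,\ell'$ meeting at a prescribed point $P=(1,v+\delta,1)$ with $v=\frac{1}{c^{\tau}+1}(c^2+c^{1-\tau}+c^{\tau+1})$, so that three of the six points lie on the fixed Baer subline $B_\infty$. The condition that the line through $(0,r,1)$ and $P$ meets $\ell_c$ inside $\cU_T$ becomes a polynomial equation in $r$, which (after an application of Lemma \ref{fxy}, which you did correctly anticipate) is engineered to be satisfied by the two roots of an auxiliary equation $Y^{\tau}+Y+A=0$; the trace condition on $c$ guarantees two distinct nonzero roots. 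So while you correctly foresaw the role of Lemma \ref{fxy}, the final trace computation, and even (in your last sentence) the ``fixed Baer subline'' form of the configuration, the main mechanism you propose would fail, and the crucial missing ideas are the specific choices of $c$, of the apex point $P$, and of the three collinear configuration points on $B_\infty$.
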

Let $\cU_{T}$ be the Buekenhout-Tits unital in $\PG(2,q^2)$ as defined in Eqn. \eqref{def_BT}, where $q=2^m$, $m$ odd and $m>1$. Take $\delta\in\F_{4}$ such that $\delta^2+\delta+1=0$. Then $1,\delta$ from a basis of $\F_{q^2}$ over $\F_q$ since $m$ is odd. We use this $\delta$ in the definition of $\cU_T$ as in Eqn. \eqref{def_BT}. 
Recall that 
\[
f(x,y)=x^{\tau+2}+xy+y^{\tau},\quad\tau=2^{(m+1)/2}.
\]
For $y\in\F_{q^2}^*$ such that the line $\ell_{y}:=[y,1,0]$ is a secant line to $\cU_T$, we define the block $B_y $ to be the intersection of $\cU_{T}$ with the line $\ell_{y}:=[y,1,0]$. Also, it is easy to see that  the line $\ell_{\infty}=[1,0,0]$   corresponds to the block $B_{\infty}=\{(0,r,1): r\in \F_q\} \cup\{(0,1,0)\}$.

It is tempting to follow the same ideas in Section 3 and look for O'Nan configurations that are stabilized by an involution in the stabilizer of $\cU_T$ in $\PGaL(3,q^2)$. However, this is impossible after some short analysis. Therefore, we have to change our strategy. Recall that there is no O'Nan configuration that contains the special point $P_{\infty}=(0,0,1)$ by Lemma \ref{Spoint}.
\begin{lemma}\label{BT_c}
Suppose that $q=2^m$ with $m$ odd and $m>1$. Then there exists $c\in \F_q^*$ such that $\tr_{q/2}(c^{\tau+2}+c+1)=0$.
\end{lemma}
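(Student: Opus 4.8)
The plan is to treat the map $c\mapsto \tr_{q/2}(c^{\tau+2}+c+1)$ as a Boolean function on the $\F_2$-vector space $\F_q$ and to show it has algebraic degree exactly $2$. A function of positive degree is non-constant, hence attains the value $0$, and the witness will automatically be nonzero because of the hypothesis that $m$ is odd.

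First I would use the $\F_2$-linearity of $\tr_{q/2}$ to write
\[
\tr_{q/2}(c^{\tau+2}+c+1)=\tr_{q/2}(c^{\tau+2})+\tr_{q/2}(c)+\tr_{q/2}(1),
\]
and record that $\tr_{q/2}(1)=m\bmod 2=1$ since $m$ is odd. The linear term $\tr_{q/2}(c)$ and the constant $\tr_{q/2}(1)$ have $\F_2$-degree at most $1$, so they cannot affect the degree-$2$ part. Thus it suffices to show that $\phi(c):=\tr_{q/2}(c^{\tau+2})$ is a genuine quadratic form, i.e. has degree $2$ rather than degree at most $1$. Since $m\ge 3$, the exponent $\tau+2=2^{(m+1)/2}+2$ is a sum of two distinct powers of $2$, so $\phi$ is of the standard quadratic type $\tr_{q/2}(c^{2^a+2^b})$ with $a\ne b$.

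The key step is to verify that the polar (bilinear) form of $\phi$ is not identically zero. Polarizing over characteristic $2$ gives
\[
B(x,y)=\phi(x+y)+\phi(x)+\phi(y)=\tr_{q/2}\!\left(x^{\tau}y^{2}+x^{2}y^{\tau}\right),
\]
and using the Frobenius-invariance $\tr_{q/2}(z)=\tr_{q/2}(z^{2^k})$ I would rewrite this as $B(x,y)=\tr_{q/2}\!\big((x^{2^{(m-1)/2}}+x^{2^{(m+1)/2}})\,y\big)$. By nondegeneracy of the trace form, $B\equiv 0$ would force $x^{2^{(m-1)/2}}=x^{2^{(m+1)/2}}$ for all $x\in\F_q$, i.e. the two Frobenius exponents $(m-1)/2$ and $(m+1)/2$ would have to coincide modulo $m$; as they differ by $1$ and $m>1$, they are distinct residues, so $B\not\equiv 0$. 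Hence $\phi$ has degree $2$, and therefore $g(c):=\tr_{q/2}(c^{\tau+2}+c+1)$ is a Boolean function of degree exactly $2$, in particular non-constant.

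Being non-constant, $g$ takes the value $0$ at some $c\in\F_q$. Finally $g(0)=\tr_{q/2}(1)=1\ne 0$, so any such $c$ lies in $\F_q^*$, which is exactly the claim. The only delicate point is the nonvanishing of the polar form $B$; everything else is bookkeeping with the trace. I note that $m>1$ is precisely what makes $B$ nonzero, while $m$ odd is used both to ensure that $\tau=2^{(m+1)/2}$ is an integer and to guarantee $\tr_{q/2}(1)=1$, the latter forcing the witness $c$ to be nonzero.
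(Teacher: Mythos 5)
Your proof is correct, and it takes a genuinely (if mildly) different route from the paper's. The paper also works with the quadratic form $Q(x)=\tr_{q/2}(x^{\tau+2})$, but it decomposes the problem into two simultaneous conditions: it asserts that $Q$ is nondegenerate ``by direct check'' and invokes the geometric fact that a nondegenerate quadric is not contained in a hyperplane, thereby producing $c$ with $\tr_{q/2}(c^{\tau+2})=0$ and $\tr_{q/2}(c)=1$ separately (the latter also forcing $c\ne 0$). You instead treat the whole expression $g(c)=\tr_{q/2}(c^{\tau+2}+c+1)$ as a single Boolean function and show it is non-constant because its degree-two part survives; the witness is nonzero because $g(0)=1$. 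Your version is more elementary and in fact needs less: non-constancy only requires the polar form $B(x,y)=\tr_{q/2}(x^{\tau}y^2+x^2y^{\tau})$ to be not identically zero, whereas the paper's nondegeneracy claim is more delicate in characteristic $2$ --- your own computation shows $B(x,\cdot)\equiv 0$ exactly when $x^{2^{(m-1)/2}}=x^{2^{(m+1)/2}}$, i.e.\ $x\in\F_2$, so the polar form has radical $\F_2$ and ``nondegenerate'' must be read as $Q$ having no nonzero singular point, which additionally uses $Q(1)=\tr_{q/2}(1)=1$. Your explicit polarization computation supplies exactly the ``direct check'' the paper omits, at the modest cost of not producing the two trace values $\tr_{q/2}(c^{\tau+2})=0$ and $\tr_{q/2}(c)=1$ separately --- information the rest of the paper never uses anyway, since only the combined condition $\tr_{q/2}(c^{\tau+2}+c+1)=0$ is invoked later.
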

\begin{proof}
The quadratic form $Q(x)=\tr_{q/2}(x^{\tau+2})$ is nondegenerate on the $\F_2$-linear vector space ($\F_q,\,+$) by direct check.  The quadric $Q(x)=0$ is thus not contained in  the hyperplane $\tr_{q/2}(x)=0$. Therefore, there exists $c\in\F_q$ such that $\tr_{q/2}(c^{\tau+2})=0$ and $\tr_{q/2}(c)=1$, and it has the desired property.
\end{proof}

Let $\cN$ be  a putative O'Nan configuration in $\cU_T$ that contains $(0,0,1)$, $\ell_{\infty}$ and a line $\ell_c$ through $(0,0,1)$, where $c\in\F_q^*$ is chosen such that
\[
\tr_{q/2}(c^{\tau+2}+c+1)=0.
\]
Such a $c$ exists by the preceding lemma. Further assume that the other two lines of $\cN$, $\ell,\, \ell'$, intersect in a point  $P=(1,v+\delta,1)$ with $v=\frac{1}{c^{\tau}+1}(c^2+c^{1-\tau}+c^{\tau+1})$. Write
\[
\ell\cap\ell_\infty=(0,r_1,1),\quad \ell'\cap\ell_\infty=(0,r_2,1), 
\]
where $r_1,\,r_2$ are two distinct elements of $\F_q^*$. Both points lie in the block $B_\infty$. The putative O'Nan configuration $\cN$ is depicted in Figure \ref{BTO}.
\begin{figure}[htbp]
\centering
\includegraphics[width=12cm]{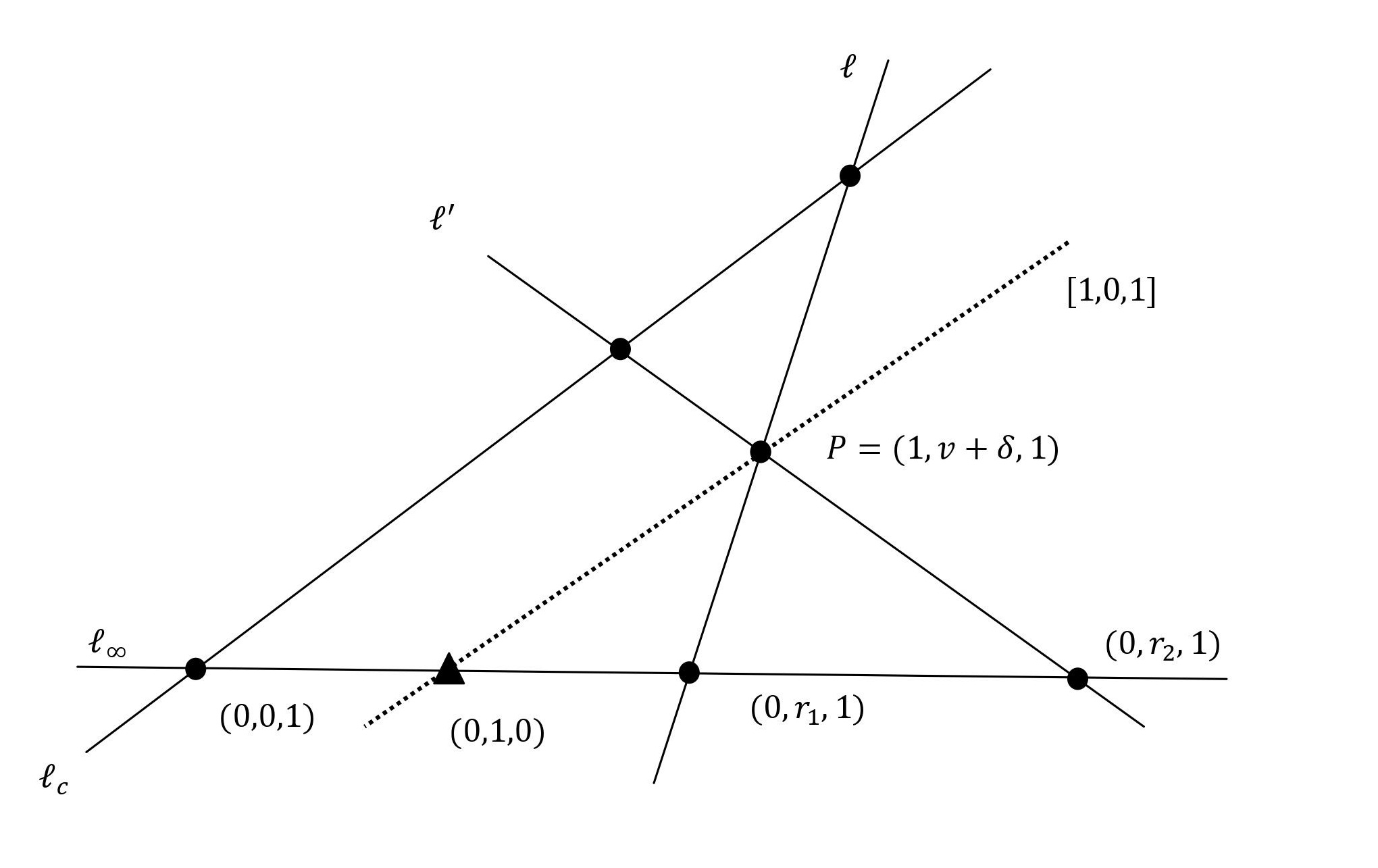}
\caption{The putative O'Nan configuration in $\cU_T$}\label{BTO}
\end{figure}
To construct an O'Nan configration $\cN$ of the prescribed form, we need to find two distinct elements $r_1,\,r_2$ in $\F_q^*$ such that the corresponding lines $\ell$, $\ell'$ both intersect $\ell_c$ in a point on $\cU_T$.

Take an element $r\in\F_q^*$, and consider the line $\ell''=[r+v+\delta,1,r]$ which contains both $(0,r,1)$ and $P=(1,v+\delta,1)$. We deduce that the intersection point $P'$ of $\ell''$ and $\ell_c$ is
$$P'=(r,rc,r+c+v+\delta).$$
For $x,\,y\in\F_q$, not both zero, we have $(x+y\delta)^{-1}=\frac{(x+y)+y\delta}{H(x,y)}$ with $H(x,y)=x^2+xy+y^2$. We thus rewrite $P'$ as
\[
P'=\left(\frac{r(r+c+v+1+\delta)}{H(r+c+v,1)},\, \frac{rc(r+c+v+1+\delta)}{H(r+c+v,1)},\,1\right).
\]
It lies on the unital $\cU_T$ if and only if
\begin{equation}\label{BT_cond}
  f\left(\frac{r(r+c+v+1)}{H(r+c+v,1)},\frac{r}{H(r+c+v,1)}\right)=\frac{rc}{H(r+c+v,1)}.
\end{equation}
Applying Lemma \ref{fxy} with $x=\frac{r(r+c+v+1)}{H(r+c+v,1)}$ and $y=\frac{r}{H(r+c+v,1)}$, we have that the left hand side of Eqn. \eqref{BT_cond} equals
$f\left(c^{-1}, c^{-1}(r+c+v+1)\right)^{-1}$. Therefore, the condition $\eqref{BT_cond}$ is equivalent to
\begin{equation*}\label{BT_cond1}
  f\left(\frac{1}{c},\frac{r+c+v+1}{c}\right)=\frac{H(r+c+v,1)}{rc}.
\end{equation*}
Expanding and simplifying it, we have 
\begin{equation}\label{BT_eq}
    \frac{r^{\tau}}{c^{\tau}}+\frac{c+1}{c^2}r+
    \left(\frac{1}{c^{\tau+2}}+\frac{v+1}{c^2}+1+\frac{v^{\tau}+1}{c^{\tau}}\right)+\frac{H(c+v,1)}{rc}=0.
\end{equation}
To summarize, we have proved the following result.
\begin{lemma}
If Eqn. \eqref{BT_eq} have two distinct nonzero solutions in $\F_q$, then we have a O'Nan configuration in $\cU_T$ as depicted in Figure \ref{BTO}. 
\end{lemma}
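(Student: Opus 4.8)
The plan is to package a pair of solutions of Eqn. \eqref{BT_eq} into a genuine O'Nan configuration and then discharge the non-degeneracy, since the analytic core is already in place: for every $r\in\F_q^*$ the line $\ell''=[r+v+\delta,1,r]$ passes through the two points $(0,r,1)$ and $P=(1,v+\delta,1)$ of $\cU_T$, and its intersection $P'$ with $\ell_c$ lies on $\cU_T$ if and only if $r$ satisfies Eqn. \eqref{BT_eq}, the equivalence coming from Lemma \ref{fxy}. I would fix distinct nonzero roots $r_1\ne r_2$ of \eqref{BT_eq}, set $\ell_1:=[r_1+v+\delta,1,r_1]$ and $\ell_2:=[r_2+v+\delta,1,r_2]$, and take $\ell_\infty,\ell_c,\ell_1,\ell_2$ as the four lines.

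The six pairwise intersections and the check that each lies on $\cU_T$ are then routine. Three are members of the block $B_\infty\subseteq\cU_T$, namely $\ell_\infty\cap\ell_c=(0,0,1)$, $\ell_\infty\cap\ell_1=(0,r_1,1)$ and $\ell_\infty\cap\ell_2=(0,r_2,1)$. The point $\ell_1\cap\ell_2=P=(1,v+\delta,1)$ lies on $\cU_T$ because $f(1,0)=1$ and $v\in\F_q$, and the remaining two intersections $\ell_c\cap\ell_1$, $\ell_c\cap\ell_2$ are exactly the points $P'$ attached to $r_1$ and $r_2$, hence on $\cU_T$ by \eqref{BT_eq}. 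None of the six points is the special point $(0,1,0)$, consistent with Lemma \ref{Spoint}.

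The substance is the non-degeneracy: I must show the four lines are distinct and that no three are concurrent, since any coincidence among the $\binom{4}{2}$ intersections would make three of the lines meet, so ``no three concurrent'' already forces the six points apart. Distinctness of the lines is immediate: $\ell_\infty$ has a different coordinate shape, $r_1\ne r_2$ gives $\ell_1\ne\ell_2$, and $r_i\ne0$ gives $\ell_i\ne\ell_c$. For the four triples, $\{\ell_\infty,\ell_c,\ell_i\}$ is concurrent only if $(0,0,1)\in\ell_i$, i.e. $r_i=0$, which is excluded; $\{\ell_\infty,\ell_1,\ell_2\}$ would require $P\in\ell_\infty$, impossible since $P$ has first coordinate $1$; and $\{\ell_c,\ell_1,\ell_2\}$ would require $P\in\ell_c$, i.e. $c\cdot 1+(v+\delta)=0$, forcing $\delta=c+v\in\F_q$, which is false. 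I expect this last triple to be the only genuine obstacle: it is precisely the assertion that the two new points $\ell_c\cap\ell_1$ and $\ell_c\cap\ell_2$ are distinct, and it is exactly here that $\delta\notin\F_q$ (available because $m$ is odd) is used. With all four triples non-concurrent, the six points are pairwise distinct, so $\{\ell_\infty,\ell_c,\ell_1,\ell_2\}$ is an O'Nan configuration of the form depicted in Figure \ref{BTO}.
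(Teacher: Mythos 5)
Your proposal is correct and follows essentially the same route as the paper, which simply takes $r_1,r_2$ to be two distinct nonzero solutions of Eqn.~\eqref{BT_eq} and relies on the preceding computation of $P'$. You additionally supply the non-degeneracy verification (distinctness of the four lines and non-concurrency of each triple, with $\delta\notin\F_q$ ruling out $P\in\ell_c$), which the paper leaves implicit; this is a welcome but not divergent addition.
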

\begin{proof}
Take $r_1,\,r_2$ to be two distinct nonzero solutions of Eqn. \eqref{BT_eq}.
\end{proof}
Recall that $v=\frac{1}{c^{\tau}+1}(c^2+c^{1-\tau}+c^{\tau+1})$. Now define
\begin{equation*}\label{f1}
f_1(X):=\frac{X^{\tau}}{c^{\tau}}+\frac{X}{c^2}+\left(\frac{1}{c}+1\right)^{\tau+2}+\frac{v}{c^2}+\frac{v^{\tau}}{c^{\tau}}+ \frac{1}{c^{\tau+1}}.
\end{equation*}
It is tedious but routine to check that  Eqn. \eqref{BT_eq} can be rewritten in the form
\begin{equation*}\label{BT_eq1}
f_1(r)+\frac{c}{r}\left(\big(f_1(r)\big)^{\tau}+\frac{1}{c^\tau}f_1(r)\right)=0.
\end{equation*}
\begin{lemma}
The polynomial $f_1(X)=0$ has two nonzero solutions in $\F_q$.
\end{lemma}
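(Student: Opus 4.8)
The plan is to exploit that $f_1$ is affine $\F_2$-linearized. Write $L(X):=\frac{X^\tau}{c^\tau}+\frac{X}{c^2}$, so that $f_1(X)=L(X)+C$ with $L$ additive and $C$ the constant term. Since $L$ is $\F_2$-linear, $f_1(X)=0$ has either no solution or exactly $|\ker L|$ solutions, all in one coset of $\ker L$. First I would pin down $\ker L$: writing $\tau=2^k$ and $m=2k-1$, we have $\gcd(k,2k-1)=1$, hence $\gcd(\tau-1,q-1)=2^{\gcd(k,2k-1)}-1=1$, so $X\mapsto X^{\tau-1}$ is a bijection of $\F_q^\ast$. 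Then $L(X)=0$ forces $X^{\tau-1}=c^{\tau-2}$ or $X=0$, and using $\tau^2\equiv 2\pmod{q-1}$ one checks the unique nonzero solution is $c^{-\tau}$; thus $\ker L=\{0,c^{-\tau}\}$ has order $2$. Because $f_1(0)=f_1(c^{-\tau})=C$, the two roots (when they exist) are both nonzero precisely when $C\neq 0$. So the lemma splits into (i) $C\in\mathrm{Im}\,L$ and (ii) $C\neq 0$.

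For (i) I would reduce solvability to a single trace condition. Multiplying by $c^\tau$ turns $f_1(X)=0$ into $X^\tau+c^{\tau-2}X=c^\tau C$; substituting $X=c^{-\tau}U$ and using $c^{\tau^2}=c^2$ collapses the left side to $c^{-2}(U^\tau+U)$, giving $U^\tau+U=c^{\tau+2}C$. Since $\tr_{q/2}(U^\tau+U)=0$ identically and $\ker(U\mapsto U^\tau+U)=\F_2$ has order $2$, the image of $U\mapsto U^\tau+U$ is exactly the hyperplane $\ker\tr_{q/2}$; hence $f_1$ has (exactly two) roots iff $\tr_{q/2}(c^{\tau+2}C)=0$. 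I would then expand $c^{\tau+2}C=1+c+c^2+c^\tau+c^{\tau+2}+vc^\tau+v^\tau c^2$ and take traces. The decisive cancellation is $\tr_{q/2}(v^\tau c^2)=\tr_{q/2}(vc^\tau)$, obtained by applying the $2^{k-1}$-power Frobenius and using $v^q=v$, so these two terms disappear regardless of the value of $v$. Combined with $\tr_{q/2}(c^2)=\tr_{q/2}(c^\tau)=\tr_{q/2}(c)$, this gives $\tr_{q/2}(c^{\tau+2}C)=\tr_{q/2}(c^{\tau+2}+c+1)=0$ by the choice of $c$. This settles (i), so $f_1$ has exactly two roots.

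The main obstacle is (ii), showing $C\neq 0$. First note $c\neq 1$, since $\tr_{q/2}(1+1+1)=\tr_{q/2}(1)=1\neq 0$ rules out $c=1$; hence $v$ is well defined and $\Delta:=(c^\tau+1)(c^2+1)\neq 0$. Substituting the explicit $v=(c^2+c^{1-\tau}+c^{\tau+1})/(c^\tau+1)$ into $c^{\tau+2}C$, clearing $\Delta$, and simplifying with $c^{\tau^2}=c^2$, I expect $c^{\tau+2}C\cdot\Delta$ to reduce to
\[
(1+c)^4+c^\tau(1+c)^3+c^{2\tau+1}(1+c+c^2)+c^{3\tau+2}.
\]
Thus (ii) becomes the statement that this polynomial is nonzero for every admissible $c$. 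I would attempt this by factoring out $(1+c)$ (which divides it, as it vanishes at $c=1$) and showing the remaining factor has no admissible root, ideally by relating it to the anisotropy of the Tits form $f(x,y)=x^{\tau+2}+xy+y^\tau$, which vanishes only at the origin; absent a clean factorization one can fall back on a direct verification together with a count of admissible $c$ against the degree of this polynomial. I expect this non-vanishing step to be the genuinely technical part, while the affine-linear reduction and the trace identity in (i) carry the conceptual content of the argument.
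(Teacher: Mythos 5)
Your reduction of the solvability question is correct and is essentially the paper's own argument: the substitution $X=c^{-\tau}U$ (the paper uses $Y=c^{\tau}X$, which is the same change of variable) turns $f_1(X)=0$ into $U^{\tau}+U=c^{\tau+2}C$, the image of $U\mapsto U^{\tau}+U$ is the kernel of the absolute trace, and your trace computation $\tr_{q/2}(c^{\tau+2}C)=\tr_{q/2}(c^{\tau+2}+c+1)=0$ (using $\tr_{q/2}(v^{\tau}c^2)=\tr_{q/2}(vc^{\tau})$ and $\tr_{q/2}(c^{\tau})=\tr_{q/2}(c^2)=\tr_{q/2}(c)$) matches the paper's computation of $\tr_{q/2}(A)$ with $A=c^{\tau+2}C$. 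Your identification of $\ker L=\{0,c^{-\tau}\}$ and the observation that both roots are nonzero exactly when $C\neq 0$ is a clean way to package what the paper does implicitly. Your expansion of $c^{\tau+2}C\cdot(c^{\tau}+1)(c^2+1)$ into $(1+c)^4+c^{\tau}(1+c)^3+c^{2\tau+1}(1+c+c^2)+c^{3\tau+2}$ also checks out.

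However, step (ii) --- the assertion $C\neq 0$, equivalently $A\neq 0$ --- is left unproved, and this is a genuine gap, not a routine verification. You offer two fallback strategies, and neither closes it. Relating the displayed polynomial to the anisotropy of the Tits form is only a hope; you give no reduction. The counting fallback (comparing the degree $3\tau+2\approx 3\sqrt{2q}$ of the polynomial against the number of admissible $c$) fails for small $q$ (for $q=8$ the degree is $14$ while there are at most $4$ admissible $c$, and for $q=32$ the degree $26$ exceeds $16$), and even where it works it would only produce \emph{some} admissible $c$ rather than prove the lemma for the chosen one. The paper closes this step with a specific contradiction argument: assuming $A=0$ and substituting the value of $v$ yields a polynomial identity in $c$; multiplying it by $(1+c)^{\tau+2}/c^{\tau+2}$ and taking absolute traces (using $\tr_{q/2}(c^{\tau+2}+c+1)=0$) forces $\tr_{q/2}(c^{-\tau-2})=0$, while multiplying instead by $(1+c)^{\tau+2}/c^{2\tau+2}$ forces $\tr_{q/2}(c^{-\tau-2})=1$, a contradiction. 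Some argument of this kind, valid for every admissible $c$ and every odd $m>1$, is needed where you currently have only a plan.
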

\begin{proof}
Set $Y=c^\tau X$, and we rewrite $c^{\tau+2}f_1(X)=0$ as follows
\[
Y^\tau+Y+A=0,\quad A=(1+c)^{\tau+2}+c+c^\tau v+c^2v^\tau.
\] 
The kernel of the linear map $y\mapsto y^\tau+y$ is $\F_2$, so the above equation has either $0$ or $2$ solutions. Notice that the equation $Z^2+Z+A=0$ has a solution if and only if $\tr_{q/2}(A)=0$. Then $Y=z^\tau+z$ (or $z^\tau+z+1$) is one of the solutions of $Y^\tau+Y+A=0$ if $z^2+z+A=0$ for some $z\in\F_q$ . Hence we compute that
\begin{align*}
\tr_{q/2}(A)&=\tr_{q/2}\left((1+c)^{\tau+2}+c+c^\tau v+c^2v^\tau\right)\\
  &=\tr_{q/2}(1+c^{\tau+2}+c)=\tr_{q/2}(c^{\tau+1}+c+1)=0.
\end{align*}
Here we have used the fact that $\tau^2\equiv 2\pmod{q-1}$.  It remains to show that $A\ne 0$. And we prove it by contradiction. Substituting the value of $v$ into $A=0$, we have
\begin{equation}\label{Aeq}
(1+c)^{\tau+2}+\frac{c^{\tau+2}}{c^{\tau}+1}+c^{\tau+1}+c+\frac{c^{2\tau+2}}{c^{2}+1}+c^{\tau+2}+c^{\tau}=0
\end{equation}
Multiplying with $\frac{(1+c)^{\tau+2}}{c^{\tau+2}}$ and simplifying it, we have 
$$c^{-\tau-2}+c^{2-\tau}+c^{-2}+c^{-1}+1+c+c^{\tau-1}+c^{\tau}+c^{\tau+1}+c^{2\tau}=0.$$
Then taking the absolute trace function with the condition $\tr_{q/2}(c^{\tau+1}+c+1)=0$,  we have $\tr_{q/2}(c^{-\tau-2})=0$. And for the case Multiplying with $\frac{(1+c)^{\tau+2}}{c^{2\tau+2}}$ on the both sides of Eqn. \eqref{Aeq}, we have $\tr_{q/2}({c^{-\tau-2}}+1)=0$, which is a contradiction. This completes the proof. 
\end{proof}

To summarise, we have now completed the proof of Theorem \ref{BT}.

\section{Conclusion}

Recently, Korchm\'{a}ros, Sicilliano and Sz\"{o}nyi \cite{embed2018} proved that there is a unique embedding of the classical unital in $\PG(2,q^2)$. The basic idea is that the existence of certain configurations consisting of points and blocks implies the existence of isomorphisms between blocks. In this paper, we are inspired by their ideas, and search for O'Nan configurations of a particular form in Buekenhout unitals in $\PG(2,q^2)$.  In the case of an orthogonal Buekenhout-Metz unital, the blocks (i.e the intersection of the unital with secant lines) form  Baer sublines, and we have established the existence of O'Nan configurations which are fixed by an involution in the stabilizers of the unital in $\PGaL(3,q^2)$.   In the case of a Buekenhout-Tits unital, we establish the existence of O'Nan configurations that contain a fixed Baer subline. Our results provide  evidence to the truth of Piper's conjecture that the absence of O'Nan configurations characterizes the classical unitals.

\section*{Acknowledgement}\
This work was supported by National Natural Science Foundation of China under
   Grant No. 11771392.

\section*{Reference}
\scriptsize
\setlength{\bibsep}{0.5ex}  

%

\end{document}